\documentclass[10pt]{amsart}
\usepackage{amssymb}
\usepackage{amscd}
\usepackage{comment}

\theoremstyle{plain}
\newtheorem{theorem}{Theorem}
\newtheorem*{mainthm}{Main Theorem}
\theoremstyle{definition}

\newtheorem{proposition}{Proposition}[section]

\newtheorem{corollary}[proposition]{Corollary}
\newtheorem{lemma}[proposition]{Lemma}
\newtheorem{definition}[proposition]{Definition}
\theoremstyle{remark}
\newtheorem{claim}{Claim}

\DeclareMathOperator{\stat}{stat}

\DeclareMathOperator{\nacc}{nacc}
\DeclareMathOperator{\Sk}{Sk}

\DeclareMathOperator{\otp}{otp}
\DeclareMathOperator{\Ch}{Ch}

\DeclareMathOperator{\pr}{Pr}

\DeclareMathOperator{\ran}{ran}

\DeclareMathOperator{\cf}{cf}

\newcommand{\sk}{\vskip.05in}

\newcommand{\restr}{\upharpoonright}

\newcommand{\subs}{\subseteq}

\DeclareMathOperator{\pp}{pp}

\numberwithin{equation}{section}
\begin{document}
\title{A coloring theorem for successors of singular cardinals}
\author{Todd Eisworth}
\address{Department of Mathematics\\
         Ohio University\\
         Athens, OH 45701}
\email{eisworth@math.ohiou.edu}
 \keywords{}
 \subjclass{}
\date{\today}
\begin{abstract}
We formulate and prove (in {\sf ZFC})  a strong coloring theorem which holds at successors of singular cardinals, and use
it to answer several questions concerning Shelah's principle $\pr_1(\mu^+,\mu^+,\mu^+,\cf(\mu))$ for singular $\mu$.
\end{abstract}

\maketitle
\section{Introduction}

In earlier work (see \cite{upgradesi}), we obtained the following coloring theorem for successors of singular cardinals:
\begin{quote}
Let $\mu$ be singular. There is a function $D:[\mu^+]^2\rightarrow [\mu^+]^2\times\cf(\mu)$ such that for any unbounded $A\subs\mu^+$, there is a stationary $S\subs\mu^+$ with
\begin{equation}
[S]^2\times\cf(\mu)\subs \ran(D\restr[A]^2).
\end{equation}
\end{quote}

This paper arose out of an attempt at finding similar colorings with even stronger properties. This attempt was successful, with the happy consequence that we can use this new coloring theorem to settle a few questions which have arisen in the author's previous work.

Before proceeding any further, we need to alert the reader to one of our notational conventions.

\begin{definition}
If $A$ and $B$ are sets of ordinals, then we define
\begin{equation}
A\circledast B:=\{\langle \alpha,\beta\rangle\in A\times B: \alpha<\beta\}.
\end{equation}
If $C$ is also a set of ordinals, then we define
\begin{equation}
\label{adhoc}
A\circledast B\times C:=\{\langle\alpha,\beta,\gamma\rangle:\langle \alpha,\beta\rangle\in A\circledast B\text{ and }\gamma\in C\}
\end{equation}
\end{definition}

This notation is a bit {\em ad hoc} (especially (\ref{adhoc})), but it makes it much easier to state our main theorem.
Note that if $\kappa$ is a cardinal, then it is quite common to identify $[\kappa]^2$ with those ordered pairs $\langle \alpha,\beta\rangle$ for which $\alpha<\beta$.  In our notation, $[\kappa]^2$ therefore corresponds exactly with $\kappa\circledast\kappa$.

Moving on, we can now state the principal result of this paper:

\begin{mainthm}
Let $\mu$ be a singular cardinal. There is a function
\begin{equation}
D:[\mu^+]^2\rightarrow \mu^+\times\mu^+\times\cf(\mu)
 \end{equation}
 such that whenever $\langle t_\alpha:\alpha<\mu^+\rangle$ is a family of pairwise disjoint members of $[\mu^+]^{<\cf(\mu)}$, there are stationary subsets $S$ and $T$ of $\mu^+$ such that
 whenever
 \begin{equation}
 \langle\alpha,\beta,\delta\rangle \in S\circledast T\times \cf(\mu),
 \end{equation}
  there are $\alpha<\beta<\mu^+$ such that
\begin{equation}
D\restr t_\alpha\times t_\beta\text{ is constant with value }\langle\alpha^*,\beta^*,\delta\rangle.
\end{equation}
\end{mainthm}

At this point, the reader may well be asking ``So what?'', as this theorem seems at first glance to be only a much more technical version of the earlier result, which was quite technical in its own right. This first impression is misleading, though. In the first place, we note that this theorem is a {\sf ZFC} result --- a bit of a rarity given the mystery which still clouds the subject of partition relations at successors of singular cardinals. More importantly, though, it turns that this result is powerful enough to have some important consequences for combinatorial set theory. In particular, it puts us in a position to answer several questions concerning a family of combinatorial principles studied extensively by Shelah in his book~\cite{cardarith}:

\begin{definition}
Suppose $\mu$ is a singular cardinal and $\sigma\leq\mu^+$. The principle $\pr_1(\mu^+,\mu^+,\sigma,\cf(\mu))$
holds if there is a function $f:[\mu^+]^2\rightarrow\sigma$ such that whenever $\langle t_\alpha:\alpha<\lambda\rangle$
is a sequence of pairwise disjoint elements of $[\mu^+]^{<\cf(\mu)}$, then for any $\epsilon<\sigma$ we can find $\alpha<\beta<\mu^+$ such that $f\restr t_\alpha\times t_\beta$ is constant with value $\epsilon$.
\end{definition}

The above definition is clearly only a special case of something much more general (after all, the notation involves five parameters), but we have focused only on the cases of interest to us here.   Note as well that $\pr_1(\mu^+,\mu^+,\theta,\cf(\mu))$ is a very strong version of more standard negative square-brackets partition relations, as the following implications hold:
\begin{equation}
\label{implications}
\pr_1(\mu^+,\mu^+,\theta,\cf(\mu))\Longrightarrow\mu^+\nrightarrow[\mu^+]^2_{\theta}\Longrightarrow \mu^+\nrightarrow [\mu^+]^{<\omega}_\theta.
\end{equation}

What sorts of consequences can we deduce from our main theorem? One example of importance to the author's work is the equivalence of the two statements
\begin{gather}
\label{first1}\pr_1(\mu^+,\mu^+,\mu^+,\cf(\mu))\\
\intertext{and}
\label{second1}\pr_1(\mu^+,\mu^+,\mu,\cf(\mu)).
\end{gather}
The equivalence of these two statements answers a question that has been around since Shelah's original work on~\cite{535} in the mid 1990s. This is important because in many situations it is the first statement which is sought, while the proof only establishes the second.  In some situations, an {\em ad hoc} argument depending on the nature of the function obtained to witness~(\ref{second1}) has been given to allow one to obtain~(\ref{first1}). For example, this is what occurs in~\cite{535}. In other instances, however, the function constructed to verify~(\ref{second1}) did not seem to admit such an upgrade, a set of circumstances which occurred in the author's~\cite{ideals} and Chapter~IV of Shelah's~\cite{cardarith}.

The equivalence of~(\ref{first1}) and~(\ref{second1}) removes these concerns, and allows us to resolve the associated questions.  For example, we can show now that
\begin{equation}
\pp(\mu)=\mu^+\Longrightarrow \pr_1(\mu^+,\mu^+,\mu^+,\cf(\mu)),
\end{equation}
extending a chain of theorems containing results of Erd\H{o}s and Hajnal, Shelah, and Todor{\v{c}}evi{\'c}. Another consequence is that the main theorem of~\cite{ideals} can be fully extended to cover singular cardinals of countable cofinality, and this has consequences for stationary reflection. We will discuss these matters in much more detail in the final section of the paper, as well as obtaining several other related results. Readers willing to accept our main theorem as a ``black box'' can certainly the last section with no problems.

\section{Background material}

The background material we need is almost identical to that required for the results of~\cite{upgradesi}, although we do need to be a little more sophisticated in our use of scales.  The background divides neatly into four separate areas, so we consider each in turn.

\noindent{\sf Minimal Walks}

\smallskip

We must be content with only a brief introduction to Todor{\v{c}}evi{\'c}'s technique of minimal walks and its generalizations. We apology in advance for the notation --- we will be mixing methods of Todor{\v{c}}evi{\'c} together with arguments of Shelah, and so our notation is a hybrid of notations they utilize.

We start by recalling that $\bar{e}=\langle e_\alpha:\alpha<\lambda\rangle$ is a $C$-sequence for the cardinal
$\lambda$ if $e_\alpha$ is closed unbounded in $\alpha$ for each $\alpha<\lambda$.
Given $\alpha<\beta<\lambda$ the {\em minimal walk from $\beta$ to $\alpha$ along $\bar{e}$}
is defined to be the sequence $\beta=\beta_0>\dots>\beta_{n}=\alpha$ obtained by setting
\begin{equation}
\beta_{i+1}=\min(e_{\beta_i}\setminus\alpha).
\end{equation}

We will need to use several functions defined in terms of minimal walks.
For example, we need the function $\rho_2:[\lambda]^2\rightarrow\omega$ giving the length of the walk from $\beta$ to $\alpha$, that is,
\begin{equation}
\rho_2(\alpha,\beta)=\text{ least $i$ for which $\beta_i(\alpha,\beta)=\alpha$}.
\end{equation}
Next, for $i\leq\rho_2(\alpha,\beta)$, we set
\begin{equation*}
\beta_i^-(\alpha,\beta)=
\begin{cases}
0 &\text{if $i=0$},\\
\sup(e_{\beta_j(\alpha,\beta)}\cap\alpha) &\text{if $i=j+1$ for $j<\rho_2(\alpha,\beta)$}.
\end{cases}
\end{equation*}
Note that if $0<i<\rho_2(\alpha,\beta)$ then
\begin{itemize}
\item  $\beta_i^-(\alpha,\beta)=\max(e_{\beta_{i-1}(\alpha,\beta)}\cap\alpha)$ (as opposed to ``sup''),
\sk
\item $\beta_i^-(\alpha,\beta)<\alpha<\beta_i(\alpha,\beta)$, and
\sk
\item $\beta_i(\alpha,\beta)=\min(e_{\beta_{i-1}(\alpha,\beta)}\setminus \beta_i^-(\alpha,\beta)+1)$.
\sk
\end{itemize}
Thus, for $0<i<\rho_2(\alpha,\beta)$, the ordinals $\beta^-_i(\alpha,\beta)$ and $\beta_i(\alpha,\beta)$ are the two consecutive elements in $e_{\beta_{i-1}(\alpha,\beta)}$ which ``bracket''~$\alpha$.

The limiting case where $i=\rho_2(\alpha,\beta)$ turns out to be quite important to us as well.  In this situation,  we have
\begin{itemize}
\sk
\item $\beta_{\rho_2(\alpha,\beta)}^-(\alpha,\beta)\leq\alpha=\beta_{\rho_2(\alpha,\beta)}(\alpha,\beta)$, and
\sk
\item $\beta^-_{\rho_2(\alpha,\beta)}(\alpha,\beta)<\alpha$ if and only if $\alpha\in\nacc(e_{\beta_{\rho_2(\alpha,\beta)-1}(\alpha,\beta)})$.
\end{itemize}
Notice that $\alpha$ must be an element of  $e_{\beta_{\rho_2(\alpha,\beta)-1}(\alpha,\beta)}$ by definition, and  $\beta^-_{\rho_2(\alpha,\beta)}(\alpha,\beta)$ is less than $\alpha$ precisely when $\alpha$ fails to be an accumulation point of $e_{\beta_{\rho_2(\alpha,\beta)-1}(\alpha,\beta)}$ (this should also explain the meaning of ``$\nacc$'').

Continuing our discussion, we define
\begin{gather}
\gamma(\alpha,\beta)=\beta_{\rho_2(\alpha,\beta)-1}(\alpha,\beta),\\
\gamma^-(\alpha,\beta)=\max\{\beta_i^-(\alpha,\beta):i<\rho_2(\alpha,\beta)\},
\intertext{and}
\label{etadef}
\eta(\alpha,\beta)=\max\{\beta_i^-(\alpha,\beta):i\leq\rho_2(\alpha,\beta)\}.
\end{gather}

The following proposition contains some standard facts about minimal walks couched in our notation. The proof is an easy induction.

\begin{proposition}
\label{minimalprop}
Let $\bar{e}$ be a $C$-system, and suppose $\alpha<\beta$.
\begin{enumerate}
\item $\gamma^-(\alpha,\beta)<\alpha$, and if $\gamma^-(\alpha,\beta)<\alpha^*\leq\alpha$ then
\begin{equation}
\beta_i(\alpha,\beta)=\beta_i(\alpha^*,\beta)\text{ for $i<\rho_2(\alpha,\beta)$}.
\end{equation}
\sk
\item $\eta(\alpha,\beta)\leq\alpha$, and if it happens that $\eta(\alpha,\beta)<\alpha^*\leq\alpha$, then
\begin{equation}
\beta_i(\alpha,\beta)=\beta_i(\alpha^*,\beta)\text{ for $i\leq\rho_2(\alpha,\beta)$}.
\end{equation}
In particular,
\begin{equation}
\beta_{\rho_2(\alpha,\beta)}(\alpha^*,\beta)=\alpha.
\end{equation}
\end{enumerate}
\end{proposition}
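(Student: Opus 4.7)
The plan is to prove both statements by a straightforward induction on $i$, using the fact that the walk from $\beta$ to $\alpha$ and the walk from $\beta$ to $\alpha^*$ can only diverge at step $i+1$ if some element of $e_{\beta_i}$ separates $\alpha^*$ from $\alpha$, and this possibility is precisely what the hypotheses on $\gamma^-$ and $\eta$ rule out. The inequalities $\gamma^-(\alpha,\beta)<\alpha$ and $\eta(\alpha,\beta)\leq\alpha$ are immediate from the bracketing bullets preceding the statement: $\beta_0^-(\alpha,\beta)=0$, $\beta_i^-(\alpha,\beta)<\alpha$ for $0<i<\rho_2(\alpha,\beta)$, and $\beta_{\rho_2(\alpha,\beta)}^-(\alpha,\beta)\leq\alpha$.

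For part (1), suppose $\gamma^-(\alpha,\beta)<\alpha^*\leq\alpha$ and argue by induction on $i<\rho_2(\alpha,\beta)$ that $\beta_i(\alpha^*,\beta)=\beta_i(\alpha,\beta)$. The base case $i=0$ is trivial since both walks start at $\beta$. For the inductive step, assume the equality holds at $i$ with $i+1<\rho_2(\alpha,\beta)$; then
\[
\beta_{i+1}(\alpha,\beta)=\min(e_{\beta_i(\alpha,\beta)}\setminus\alpha)\quad\text{and}\quad \beta_{i+1}(\alpha^*,\beta)=\min(e_{\beta_i(\alpha,\beta)}\setminus\alpha^*).
\]
By definition, $\beta_{i+1}^-(\alpha,\beta)=\sup(e_{\beta_i(\alpha,\beta)}\cap\alpha)\leq\gamma^-(\alpha,\beta)<\alpha^*$, so no element of $e_{\beta_i(\alpha,\beta)}$ lies in the half-open interval $[\alpha^*,\alpha)$. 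Hence the two minima are equal, completing the induction.

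For part (2), the very same induction applies through $i<\rho_2(\alpha,\beta)$, since $\eta(\alpha,\beta)\geq\gamma^-(\alpha,\beta)$. It remains to treat the final step $i=\rho_2(\alpha,\beta)$. By definition of $\rho_2$, $\alpha\in e_{\beta_{\rho_2(\alpha,\beta)-1}(\alpha,\beta)}$ and $\beta_{\rho_2(\alpha,\beta)}(\alpha,\beta)=\alpha$; moreover $\beta_{\rho_2(\alpha,\beta)}^-(\alpha,\beta)=\sup(e_{\beta_{\rho_2(\alpha,\beta)-1}(\alpha,\beta)}\cap\alpha)\leq\eta(\alpha,\beta)<\alpha^*$. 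Exactly as before, no element of $e_{\beta_{\rho_2(\alpha,\beta)-1}(\alpha,\beta)}$ lies in $[\alpha^*,\alpha)$, so
\[
\beta_{\rho_2(\alpha,\beta)}(\alpha^*,\beta)=\min(e_{\beta_{\rho_2(\alpha,\beta)-1}(\alpha,\beta)}\setminus\alpha^*)=\alpha,
\]
which gives the ``in particular'' clause.

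There is no serious obstacle here; the only thing to watch is bookkeeping, namely keeping the two walks notationally distinct while noting that the inductive hypothesis lets us freely replace $e_{\beta_i(\alpha^*,\beta)}$ by $e_{\beta_i(\alpha,\beta)}$ at each stage. The conceptual content is simply that $\gamma^-(\alpha,\beta)$ (resp.\ $\eta(\alpha,\beta)$) records the largest point below $\alpha$ that is ``visible'' during the walk, so anything strictly above it is indistinguishable from $\alpha$ for the purposes of the first $\rho_2(\alpha,\beta)-1$ (resp.\ $\rho_2(\alpha,\beta)$) steps.
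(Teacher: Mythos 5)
Your proof is correct, and it is precisely the ``easy induction'' that the paper invokes without writing out: the paper gives no details beyond that remark, and your argument supplies them in the intended way, with the key observation (that $\gamma^-$, resp.\ $\eta$, bounds $\sup(e_{\beta_i}\cap\alpha)$ at every relevant step, so $e_{\beta_i}\cap[\alpha^*,\alpha)=\emptyset$) correctly identified and applied. The bookkeeping at the final step $i=\rho_2(\alpha,\beta)$, where $\alpha\in e_{\gamma(\alpha,\beta)}$ forces the minimum above $\alpha^*$ to be $\alpha$ itself, is also handled correctly.
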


Note that part (2) of the above proposition is of no interest unless we can guarantee $\eta(\alpha,\beta)<\alpha$ (or equivalently, guarantee $\alpha\in\nacc(e_{\gamma(\alpha,\beta)})$); this will be one of our concerns in the sequel.

Proposition~\ref{minimalprop} captures the only properties of minimal walks we need, and we refer the reader to to~\cite{acta} or~\cite{stevobook} for information on more sophisticated applications.

We do need to use a generalization of the minimal walks machinery in order to handle some issues that arise when dealing with successors of singular cardinals of countable cofinality.  These techniques were introduced by the author and Shelah in~\cite{819}, and they were further developed in~\cite{ideals}.

\begin{definition}
\label{generalizeddef}
Let $\lambda$ be a cardinal. A {\em generalized $C$-sequence} is a family
\begin{equation*}
\bar{e}=\langle e^m_\alpha:\alpha<\lambda, m<\omega\rangle
\end{equation*}
such that for each $\alpha<\lambda$ and $m<\omega$,
\begin{itemize}
\item $e^m_\alpha$ is closed unbounded in $\alpha$, and
\sk
\item $e^m_\alpha\subs e^{m+1}_\alpha$.
\sk
\end{itemize}
\end{definition}

One can think of a generalized $C$-sequence as a countable family of $C$-sequences which are increasing in a sense. One can also utilize generalized $C$-sequences in the context of minimal walks. In this paper, we do this in the simplest fashion --- given $m<\omega$ and $\alpha<\beta<\lambda$, we let the {\em $m$-walk from $\beta$ to $\alpha$ along $\bar{e}$} consist of the minimal walk from $\beta$ to $\alpha$ using the $C$-sequence $\langle e^m_\gamma:\gamma<\lambda\rangle$. Such walks have their associated parameters, and we use the superscript $m$ to indicate which part of the generalized $C$-sequence is being used in computations. So, for example, the $m$-walk from $\beta$ to $\alpha$ along $\bar{e}$ will have length $\rho_2^m(\alpha,\beta)$, and consist of ordinals denoted $\beta^m_i(\alpha,\beta)$ for $i\leq\rho^m_2(\alpha,\beta)$.

The requirement that $e^m_\alpha\subs e^{m+1}_\alpha$ is relevant for the following reason.  Given $\alpha<\beta$, we note that the sequence $\langle \min(e_\beta^m\setminus\alpha):m<\omega\rangle$ is non-increasing, and therefore eventually constant. From this it follows easily that the $m$-walk from $\beta$ to~$\alpha$  along $\bar{e}$ is exactly the same for all sufficiently large $m$, that is, there is an $m^*$ such that
\begin{equation}
\label{largeenoughm}
\langle \beta^m_i(\alpha,\beta):i<\rho^m_2(\alpha,\beta)\rangle = \langle \beta^{m^*}_i(\alpha,\beta):i<\rho^{m^*}_2(\alpha,\beta)\rangle
\end{equation}
whenever $m\geq m^*$.

\medskip

\noindent{\sf Club-guessing}

\medskip

Just as in~\cite{upgradesi}, we are going to need generalized $C$-sequences that have been carefully selected to interact with certain club-guessing sequences.  The sort of  club-guessing sequence we use depends on whether or not the cofinality of our singular cardinal~$\mu$ is uncountable, and we will handle each case separately. In either case, we will be defining a stationary set $S\subs \mu^+$, a club-guessing sequence $\bar{C}$, and a generalized $C$-sequence $\bar{e}$.

If the cofinality of $\mu$ is uncountable, then we define
\begin{equation}
S:= S^{\mu^+}_{\cf(\mu)}=\{\delta<\mu^+:\cf(\delta)=\cf(\mu)\}.
\end{equation}
Claim~2.6 on page~127 of~\cite{cardarith} (or see Theorem~2 of \cite{819}) gives us a sequence $\langle C_\delta:\delta\in S\rangle$ such that
\begin{itemize}
\sk
\item $C_\delta$ is club in $\delta$,
\sk
\item $\otp(C_\delta)=\cf(\mu)$,
\sk
\item $\langle\cf(\alpha):\alpha\in\nacc(C_\delta)\rangle$ increases to $\mu$, and
\sk
\item whenever $E$ is club in~$\mu^+$, there are stationarily many $\delta\in S$ for which $C_\delta\subs E$.
\sk
\end{itemize}
(Recall ``$\nacc(C_\delta)$'' refers to the non-accumulation points of $C_\delta$, that is, those elements of $C_\delta$ that are not limits of points in $C_\delta$.)

We now use the ``ladder swallowing'' trick (see Lemma~13 of~\cite{nsbpr}) to build a $C$-sequence $\langle e_\alpha:\alpha<\mu^+\rangle$ such that for each $\alpha<\mu^+$,
\begin{gather}
|e_\alpha|< \mu,\\
\intertext{and}
\delta\in S\cap e_\alpha\Longrightarrow C_\delta\subs e_\alpha.
\end{gather}
We then construct a (admittedly somewhat trivial) generalized $C$-sequence $\bar{e}=\langle e^m_\alpha:m<\omega,\alpha<\mu^+\rangle$ by setting $e^m_\alpha = e_\alpha$ for all $m<\omega$.

In the case where $\mu$ is of countable cofinality, our definition of $S$, $\bar{C}$, and $\bar{e}$ is a little more involved because of some open questions concerning club-guessing. A reader interested in these issues can find a more detailed discussion in~\cite{819}, but we shall rely on technology developed in~\cite{ideals}.

Start by setting
\begin{equation}
S:= S^{\mu^+}_{\aleph_1}=\{\delta<\mu^+:\cf(\delta)=\aleph_1\},
\end{equation}
and assume $\langle \mu_m:m<\omega\rangle$ is an increasing sequence of uncountable cardinals cofinal in~$\mu$.

We are going to present a simplified version of the conclusion of Theorem~4 of~\cite{ideals}; the reader can consult that paper for a detailed proof (Proposition~5.8 is particularly relevant).
In particular, the work in~\cite{ideals} provides us with a sequence $\langle C_\delta:\delta\in S\rangle$ such that each $C_\delta$ is club in~$\delta$, and $C_\delta=\bigcup_{m<\omega}C_\delta[m]$ where
\begin{gather}
C_\delta[m]\text{ is closed and unbounded in }\delta,\\
|C_\delta[m]|\leq\mu^+_m,
\end{gather}
and such that for every club $E\subs\mu^+$, there are stationarily many $\delta\in S$ such that for each $m<\omega$, $\nacc(C_\delta[m])\cap E$ contains unboundedly many ordinals of cofinality greater than $\mu_m^+$.  (Note that the use of ``$\nacc$'' is redundant as the cofinality assumption guarantees such an ordinal cannot be a limit point of $C_\delta[m]$.)

An application of Lemma~5.10 from~\cite{ideals} provides us with a generalized $C$-sequence $\bar{e}=\langle e_\alpha^m:\alpha<\mu^+,m<\omega\rangle$ satisfying
\begin{gather}
\label{2.16}|e^m_\alpha|\leq\cf(\alpha)+\mu_m^+\\
\intertext{and}
\label{2.17}\delta\in S\cap e_\alpha^m\Longrightarrow C_\delta[m]\subs e_\alpha^m.
\end{gather}

No matter what the cofinality of $\mu$, the reader should give  the phrase ``choose $\delta\in S$ such that $C_\delta$ guesses $E$'' the obvious interpretation in light of the above discussion.

\medskip

\noindent{\sf Scales}

\medskip

The next ingredient we need for our theorem is the concept of a scale for a singular cardinal.

\begin{definition}
Let $\mu$ be a singular cardinal. A {\em scale for
$\mu$} is a pair $(\vec{\mu},\vec{f})$ satisfying
\begin{enumerate}
\item $\vec{\mu}=\langle\mu_i:i<\cf(\mu)\rangle$ is an increasing sequence of regular cardinals
such that $\sup_{i<\cf(\mu)}\mu_i=\mu$ and $\cf(\mu)<\mu_0$.
\item $\vec{f}=\langle f_\alpha:\alpha<\mu^+\rangle$ is a sequence of functions such that
\begin{enumerate}
\item $f_\alpha\in\prod_{i<\cf(\mu)}\mu_i$.
\item If $\gamma<\delta<\beta$ then $f_\gamma<^* f_\beta$, where  the notation $f<^* g$  means that $\{i<\cf(\mu): g(i)\leq f(i)\}$ is bounded in $\cf(\mu)$.
\item If $f\in\prod_{i<\cf(\mu)}\mu_i$ then there is an $\alpha<\beta$ such that $f<^* f_\alpha$.
\end{enumerate}
\end{enumerate}
\end{definition}

Of course, we will be using the important theorem of Shelah (see  Main Claim~1.3 on page~46 of~\cite{cardarith}) that scales exist for any singular $\mu$.  Readers seeking a gentler exposition of this and related topics can consult~\cite{cummings}, or~\cite{myhandbook}.

We are going to need a result from~\cite{nsbpr} concerning scales.  We remind the reader that notation of the form ``$(\exists^*\beta<\lambda)\psi(\beta)$''
  means $\{\beta<\lambda:\psi(\beta)\text{ holds}\}$ is unbounded below~$\lambda$, while
    ``$(\forall^*\beta<\lambda)\psi(\beta)$'' means that $\{\beta<\lambda:\psi(\beta)\text{ fails}\}$ is bounded
     below~$\lambda$.

\begin{lemma}
\label{scalelemma}
Let $\mu$ be singular,  and suppose $(\vec{\mu},\vec{f})$ is a scale
for~$\mu$.  Then there is a closed unbounded $C\subs\mu^+$ such that the following holds for every $\beta\in C$:
\begin{equation}
\label{2.18}
(\forall^*i<\cf(\mu))(\forall\eta<\mu_i)(\forall\nu<\mu_{i+1})(\exists^*\alpha<\beta)\left[f_\alpha(i)>\eta\wedge f_\alpha(i+1)>\nu\right].
\end{equation}
\end{lemma}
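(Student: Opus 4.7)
The plan is to take $C$ to be a club of elementary-submodel intersections and reduce the lemma to a combinatorial statement about the scale that is uniform in $\mu^+$. Fix a sufficiently large regular $\chi$ and a continuous $\in$-increasing chain $\langle M_\xi:\xi<\mu^+\rangle$ of elementary submodels of $(H(\chi),\in,<_\chi)$ with $|M_\xi|=\mu$, $\mu\cup\{\vec\mu,\vec f\}\subseteq M_\xi$, and $M_\xi\cap\mu^+\in\mu^+$. Set $C:=\{M_\xi\cap\mu^+:\xi<\mu^+\}$, a club of $\mu^+$. For any $\beta=M_\xi\cap\mu^+\in C$, every $\eta<\mu_i$ and $\nu<\mu_{i+1}$ lies in $M_\xi$ (since $\mu\subseteq M_\xi$), and the set $A_{i,\eta,\nu}:=\{\alpha<\mu^+:f_\alpha(i)>\eta\wedge f_\alpha(i+1)>\nu\}$ is an element of $M_\xi$.

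The main step is to prove a $\mu^+$-global version of the conclusion: there exists $i^*<\cf(\mu)$ such that for all $i\geq i^*$, $\eta<\mu_i$, and $\nu<\mu_{i+1}$, the set $A_{i,\eta,\nu}$ is unbounded in $\mu^+$. Suppose otherwise; then
\[
Y:=\{i<\cf(\mu):\exists\,\eta_i<\mu_i,\,\nu_i<\mu_{i+1},\,\gamma_i<\mu^+\text{ with }A_{i,\eta_i,\nu_i}\subseteq\gamma_i\}
\]
is unbounded in $\cf(\mu)$. Fix such witnesses for each $i\in Y$; since $|Y|\leq\cf(\mu)<\mu^+$, the ordinal $\Gamma:=\sup_{i\in Y}\gamma_i$ is below $\mu^+$. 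Define $h\in\prod_{j<\cf(\mu)}\mu_j$ by $h(j):=\max(a_j,b_j)$, where $a_j=\eta_j$ if $j\in Y$ and $0$ otherwise, and $b_j=\nu_{j-1}$ if $j\geq 1$ and $j-1\in Y$ and $0$ otherwise. Because $\nu_{j-1}<\mu_{(j-1)+1}=\mu_j$ whenever $j-1\in Y$, this legitimately defines $h(j)<\mu_j$. By the cofinality clause of the scale, pick $\alpha>\Gamma$ with $h<^* f_\alpha$, and let $I_\alpha\subseteq\cf(\mu)$ be the bounded set of coordinates where $f_\alpha(j)\leq h(j)$. Since $Y$ is unbounded and $I_\alpha$ bounded, choose $i\in Y$ with $i,i+1\notin I_\alpha$; then $f_\alpha(i)>h(i)\geq\eta_i$ and $f_\alpha(i+1)>h(i+1)\geq\nu_i$, so $\alpha\in A_{i,\eta_i,\nu_i}\subseteq\gamma_i\leq\Gamma$, contradicting $\alpha>\Gamma$.

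With the global statement in hand, transfer it to $C$ by elementarity. Given $\beta=M_\xi\cap\mu^+\in C$, $i\geq i^*$, $\eta<\mu_i$, $\nu<\mu_{i+1}$, all relevant parameters lie in $M_\xi$, $A_{i,\eta,\nu}\in M_\xi$ is unbounded in $\mu^+$, and elementarity yields that $A_{i,\eta,\nu}\cap M_\xi$ is unbounded in $\sup(M_\xi\cap\mu^+)=\beta$; since $A_{i,\eta,\nu}\cap M_\xi\subseteq A_{i,\eta,\nu}\cap\beta$, this is exactly (\ref{2.18}) at $\beta$. The crux is the diagonalization: because the conclusion constrains $f_\alpha$ at the two consecutive coordinates $i$ and $i+1$ simultaneously, the witness $h$ must encode both $\eta_j$ and $\nu_{j-1}$ at coordinate $j$, and it is precisely the inequality $\nu_{j-1}<\mu_j$ that keeps $h$ inside $\prod\mu_j$ so that the scale's cofinality clause can be invoked.
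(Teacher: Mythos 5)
Your proof is correct: the diagonalization against the scale (with $h(j)$ absorbing both $\eta_j$ and $\nu_{j-1}$, which stays in $\prod_j\mu_j$ precisely because $\nu_{j-1}<\mu_j$) establishes the uniform global unboundedness of the sets $A_{i,\eta,\nu}$ for $i\geq i^*$, and the elementary-submodel transfer to the club $C$ is sound. The paper itself only cites Lemma~7 of~\cite{nsbpr} for this fact, and your argument is essentially the standard one given there, so nothing further is needed.
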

\begin{proof}
See Lemma 7 of~\cite{nsbpr}.
\end{proof}
\medskip

\noindent{\sf Elementary Submodels}

\medskip

We have the usual conventions when dealing with elementary submodels. For example, we always assume that $\chi$ is regular cardinal much larger than anything relevant to discussion at hand, and we  let $\mathfrak{A}$ denote the structure $\langle H(\chi),\in, <_\chi\rangle$ where $H(\chi)$ is the collection of sets hereditarily of cardinality less than $\chi$, and $<_\chi$ is some suitable well-order of $H(\chi)$. We include $<_\chi$ in our structure so that we can talk about Skolem hulls, as the well-ordering gives us definable Skolem functions. In general, if $B\subs H(\chi)$, then we denote the Skolem hull of $B$ in $\mathfrak{A}$ by $\Sk_{\mathfrak{A}}(B)$.

The following result of Baumgartner~\cite{jb} is critical:

\begin{lemma}
\label{newcharlem}
Assume that $M\prec\mathfrak{A}$ and let $\sigma\in M$ be a cardinal.  If we define $N=\Sk_{\mathfrak{A}}(M\cup\sigma)$
then for all regular cardinals $\tau\in M$ greater than $\sigma$, we have
\begin{equation*}
\sup(M\cap\tau)=\sup(N\cap\tau).
\end{equation*}
\end{lemma}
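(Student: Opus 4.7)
The plan is to verify the nontrivial direction $\sup(N\cap\tau)\leq\sup(M\cap\tau)$; the reverse inequality is immediate from $M\subseteq N$. If $\sigma$ is finite the claim is vacuous, since $\sigma\subseteq M$ forces $N=M$, so I may assume $\sigma$ is an infinite cardinal. The strategy is to show that every ordinal $\nu\in N\cap\tau$ is dominated by some ordinal already in $M\cap\tau$, with the regularity of $\tau$ providing the pigeonhole.

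Fix $\nu\in N\cap\tau$. By definition of the Skolem hull there is a Skolem term $t$, a finite tuple $\bar{m}$ of parameters from $M$, and a finite tuple $\bar{\xi}\in\sigma^{k}$ such that $\nu=t^{\mathfrak{A}}(\bar{m},\bar{\xi})$. Working inside $M$, define
\[
g(\bar{x})=\begin{cases} t^{\mathfrak{A}}(\bar{m},\bar{x}) & \text{if this is an ordinal below }\tau,\\ 0 & \text{otherwise.}\end{cases}
\]
Because $t$, $\bar{m}$, $\sigma$, and $\tau$ all lie in $M$, the function $g\colon\sigma^{k}\to\tau$ belongs to $M$. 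Since $|\ran(g)|\leq|\sigma^{k}|=\sigma<\tau$ and $\tau$ is regular, the ordinal $\beta:=\sup\ran(g)$ is strictly less than $\tau$. As $\beta$ is definable from $g$ and $\tau$, we have $\beta\in M\cap\tau$, and $\nu=g(\bar{\xi})\leq\beta$. Hence $\nu\leq\sup(M\cap\tau)$, and since $\nu\in N\cap\tau$ was arbitrary we obtain the desired inequality.

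There is no deep step here; the one point that must be tracked carefully is that it is the cardinal $\sigma$ itself which belongs to $M$, not merely the particular ordinals $\bar{\xi}<\sigma$ used to produce $\nu$. That is exactly what allows $g$ to be defined inside $M$ with domain $\sigma^{k}$, and from there everything is just elementarity combined with the pigeonhole $|\sigma^{k}|<\tau$.
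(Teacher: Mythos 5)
Your proof is correct and is essentially the standard Baumgartner argument that the paper invokes by citation (to \cite{jb}, \cite{myhandbook}, and Lemma~9 of \cite{nsbpr}): bound each Skolem value $t^{\mathfrak{A}}(\bar{m},\bar{\xi})$ with $\bar{\xi}\in\sigma^{k}$ by the supremum of the range of the corresponding function $g\in M$ with domain $\sigma^{k}$, using $\sigma<\tau$ and the regularity of $\tau$. You correctly flag the key point that $\sigma$ itself (not the individual $\bar{\xi}$) lies in $M$, so no comparison is needed beyond noting the paper supplies no independent proof.
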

\begin{proof}
See the last section of~\cite{myhandbook}, or Lemma~9 of~\cite{nsbpr}.
\end{proof}

The above lemma gives us a crucial fact about {\em characteristic functions} of models,
which we define next.

\begin{definition}
\label{chardef}
Let $\mu$ be a singular cardinal, and let $\vec{\mu}=\langle\mu_i:i<\cf(\mu)\rangle$ be an increasing
sequence of regular cardinals cofinal in $\mu$.  If $M$ is an elementary submodel of $\mathfrak{A}$
such that
\begin{itemize}
\item $|M|<\mu$,
\item $\langle \mu_i:i<\cf(\mu)\rangle\in M$, and
\item $\cf(\mu)+1\subs M$,
\end{itemize}
then the {\em characteristic function of $M$ on $\vec{\mu}$} (denoted $\Ch^{\vec{\mu}}_M$) is the function
with domain $\cf(\mu)$ defined by
\begin{equation*}
\Ch^{\vec{\mu}}_M(i):=
\begin{cases}
\sup(M\cap\mu_i) &\text{if $\sup(M\cap\mu_i)<\mu_i$,}\\
0  &\text{otherwise.}
\end{cases}
\end{equation*}
If $\vec{\mu}$ is clear from context, then we suppress reference to it in the notation.
\end{definition}

In the above situation, it is clear that $\Ch^{\vec{\mu}}_M$ is an element of the product
$\prod_{i<\kappa}\mu_i$, and furthermore, $\Ch^{\vec{\mu}}_M(i)=\sup(M\cap\mu_i)$ for all
sufficiently large $i<\cf(\mu)$. We obtain the following as an immediate consequence of Lemma~\ref{newcharlem}.

\begin{corollary}
\label{skolemhulllemma}
Let $\mu$, $\vec{\mu}$, and $M$ be as in Definition~\ref{chardef}.
If $i^*<\cf(\mu)$ and we define~$N$ to be $\Sk_{\mathfrak{A}}(M\cup\mu_{i^*})$,
then
\begin{equation}
\Ch^{\vec{\mu}}_M\restr [i^*+1,\cf(\mu))=\Ch^{\vec{\mu}}_N\restr [i^*+1,\cf(\mu)).
\end{equation}
\end{corollary}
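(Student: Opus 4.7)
The plan is to derive the corollary as an essentially immediate repackaging of Baumgartner's Lemma~\ref{newcharlem}. Before doing anything else I would check that $\Ch^{\vec{\mu}}_N$ is well-defined, i.e., that $N=\Sk_{\mathfrak{A}}(M\cup\mu_{i^*})$ satisfies the hypotheses of Definition~\ref{chardef}. Since $M\subs N$ we still have $\vec{\mu}\in N$ and $\cf(\mu)+1\subs N$, and $|N|\leq |M|+\mu_{i^*}<\mu$ because both summands are below~$\mu$. So $\Ch^{\vec{\mu}}_N$ makes sense.

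Next, fix an arbitrary $i\in[i^*+1,\cf(\mu))$. Since $\cf(\mu)+1\subs M$ we have $i\in M$, so $\mu_i\in M$; moreover $\mu_i$ is a regular cardinal strictly greater than $\mu_{i^*}$. Thus Lemma~\ref{newcharlem} applies with $\sigma=\mu_{i^*}$ and $\tau=\mu_i$, giving
\begin{equation*}
\sup(M\cap\mu_i)=\sup(N\cap\mu_i).
\end{equation*}

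To conclude, I would simply unpack the definition of the characteristic function. By definition $\Ch^{\vec{\mu}}_M(i)$ equals $\sup(M\cap\mu_i)$ if that supremum is below $\mu_i$ and equals $0$ otherwise, and analogously for $N$. Because the two suprema agree, they are simultaneously equal to $\mu_i$ or simultaneously strictly below $\mu_i$; in either case $\Ch^{\vec{\mu}}_M(i)=\Ch^{\vec{\mu}}_N(i)$. Since $i$ was an arbitrary element of $[i^*+1,\cf(\mu))$, the desired equality of restrictions follows. There is no real obstacle here: the only subtle point is verifying the hypotheses needed to invoke Baumgartner's lemma (regularity of $\mu_i$ and membership $\mu_i\in M$), both of which are immediate from the standing assumptions on $M$ and $\vec{\mu}$.
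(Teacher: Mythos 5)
Your proof is correct and matches the paper's intent exactly: the paper offers no separate argument, presenting the corollary as an immediate consequence of Lemma~\ref{newcharlem}, and your application of that lemma with $\sigma=\mu_{i^*}$ and $\tau=\mu_i$ for each $i>i^*$, followed by unwinding Definition~\ref{chardef}, is precisely the intended route. The extra checks you perform (that $N$ satisfies the hypotheses of Definition~\ref{chardef} and that $\mu_i\in M$ is regular and exceeds $\mu_{i^*}$) are all sound.
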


We need one final bit of notation concerning elementary submodels.

\begin{definition}
Let $\lambda$ be a regular cardinal. A $\lambda$-approximating sequence is a continuous $\in$-chain
$\mathfrak{M}=\langle M_i:i<\lambda\rangle$ of elementary submodels of $\mathfrak{A}$ such that
\begin{enumerate}
\item $\lambda\in M_0$,
\item $|M_i|<\lambda$,
\item $\langle M_j:j\leq i\rangle\in M_{i+1}$, and
\item $M_i\cap\lambda$ is a proper initial segment of $\lambda$.
\end{enumerate}
If $x\in H(\chi)$, then we say that $\mathfrak{M}$ is a $\lambda$-approximating sequence over $x$ if
$x\in M_0$.
\end{definition}

Note that if $\mathfrak{M}$ is a $\lambda$-approximating sequence and $\lambda=\mu^+$, then $\mu+1\subs M_0$ because
of condition~(4) and the fact that $\mu$ is an element of each $M_i$.

\section{The minimal walk lemma}

This section begins with a technical {\em ad hoc} definition. The definition actually depends on a generalized $C$-system $\bar{e}$, a scale $(\vec{\mu},\vec{f})$, and a sequence $\langle t_\alpha:\alpha<\mu^+\rangle$ of pairwise disjoint elements of $[\mu^+]^{<\cf(\mu)}$, but we suppress this dependence in the notation.

\begin{definition}
Suppose $k<\cf(\mu)$ and $m<\omega$.
The formula $\psi_{k,m}(\eta,\eta^+,\beta^*,\gamma,\beta)$ asserts
\begin{gather}
\eta\leq\eta^+<\beta^*<\gamma\leq\min(t_\beta),\\
\eta=\min\{\eta^m(\beta^*,\epsilon):\epsilon\in t_\beta\},\\
\eta^+=\sup\{\eta^m(\beta^*,\epsilon):\epsilon\in t_\beta\},\\
\gamma=\min\{\gamma^m(\beta^*,\epsilon):\epsilon\in t_\beta\},\\
\intertext{and}
(\forall\epsilon\in t_\beta)\left[f_\eta\restr [k,\cf(\mu))\leq f_{\eta^m(\beta^*,\epsilon)}\restr [k,\cf(\mu))\leq f_{\eta^+}\restr [k,\cf(\mu))\right].
\end{gather}
\end{definition}

The above definition probably seems quite mysterious, but it isolates a certain configuration whose importance will become clear as the proof progresses. The next result is one of two technical lemmas crucial for our arguments.

\begin{lemma}
\label{mainlemma}
Suppose $\mu$ is a singular cardinal, and $S$, $\bar{C}$, and $\bar{e}$ are as in the preceding section. Further suppose that $(\vec{\mu},\vec{f})$ is a scale for $\mu$ and $\langle t_\beta:\beta<\mu^+\rangle$ is a sequence of pairwise disjoint elements of $[\mu^+]^{<\cf(\mu)}$. Then there are an $m<\omega$ and $k<\cf(\mu)$ such that
\begin{multline}
(\exists^*\eta<\mu^+)(\exists\eta^+<\mu^+)(\exists^{\stat}\beta^*<\mu^+)\\
(\exists^*\gamma<\mu^+)(\exists\beta<\mu^+)[\psi_{k,m}(\eta,\eta^+,\beta^*,\gamma,\beta)].
\end{multline}
\end{lemma}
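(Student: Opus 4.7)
My plan is to combine a $\mu^+$-approximating sequence argument with the club-guessing structure $\bar C$, the swallowing property of $\bar e$, and the scale saturation result of Lemma~\ref{scalelemma}, and to show that for suitable parameters $(m,k)$ the set of configurations witnessing $\psi_{k,m}$ is abundant.

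First, I would fix a $\mu^+$-approximating sequence $\mathfrak{M}=\langle M_i:i<\mu^+\rangle$ over $(\vec\mu,\vec f,\bar e,\bar C,S,\langle t_\beta:\beta<\mu^+\rangle)$ and let $E$ be the club of $\delta<\mu^+$ with $M_\delta\cap\mu^+=\delta$ and at which the conclusion (\ref{2.18}) of Lemma~\ref{scalelemma} holds.  Using club-guessing, select $\delta^*\in S$ so that $C_{\delta^*}$ (or $\nacc(C_{\delta^*}[m])$ in the countable-cofinality case) guesses $E$.  Choose $\beta<\mu^+$ with $\min(t_\beta)>\delta^*$, taken outside $M_{\delta^*+1}$, so that there is a cofinal collection of candidate $\beta^*\in\nacc(C_{\delta^*})\cap E$ sitting below $\delta^*$.

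The main computation is to analyze the $m$-walk from each $\epsilon\in t_\beta$ down to such a $\beta^*$.  By the swallowing condition $\delta\in S\cap e^m_\alpha\Rightarrow C_\delta[m]\subs e^m_\alpha$, once the walk reaches a step whose ladder contains $\delta^*$, the sequence $C_{\delta^*}[m]$ becomes available, and (with $\beta^*\in C_{\delta^*}$) the walk terminates at $\beta^*$ on the subsequent step.  A standard elementarity argument, exploiting that $\beta$ and $t_\beta$ lie strictly above $M_{\delta^*}$ while $\bar e\restr\delta^*\in M_{\delta^*}$, forces the penultimate step to equal $\delta^*$ itself uniformly in $\epsilon\in t_\beta$, so $\gamma^m(\beta^*,\epsilon)=\delta^*$ and hence the value $\gamma$ in $\psi_{k,m}$ is $\delta^*$.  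The drop-points $\eta^m(\beta^*,\epsilon)$ then all lie below $\delta^*$, and since $|t_\beta|<\cf(\mu)$ while (\ref{2.18}) at $\delta^*$ provides cofinally many $\alpha<\delta^*$ with $f_\alpha$ dominating any prescribed tail, I can select $\eta\leq\eta^+<\delta^*$ realizing the $\min$ and $\sup$ of the values $\eta^m(\beta^*,\epsilon)$ and also witnessing the scale sandwich $f_\eta\leq f_{\eta^m(\beta^*,\epsilon)}\leq f_{\eta^+}$ on $[k,\cf(\mu))$ for an appropriate $k$.

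Unboundedness of $\eta$ and $\gamma$ will follow by varying $\delta^*$ cofinally high in $S$, and stationarity of $\beta^*$ comes from the fact that club-guessing lets us pick $\delta^*$ so that $C_{\delta^*}\subs E\cap D$ for any prescribed club $D$, whence every club meets the resulting set of $\beta^*$'s.  The obstacle I expect to be decisive is the uniform choice of the pair $(m,k)$: the index $m$ must be large enough for the walks from members of $t_\beta$ to stabilize in the sense of (\ref{largeenoughm}) and for the swallowing mechanism to bite, while $k$ must be large enough for the scale sandwich to take effect, and a priori both depend on the particular configuration.  A pigeonhole over $\omega\times\cf(\mu)$ applied to the stationary set of candidate $\delta^*$'s produces a single pair $(m,k)$ that works for stationarily many $\beta^*$ simultaneously, completing the argument.
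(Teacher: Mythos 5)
Your overall skeleton (approximating sequence, club-guessing a $\delta^*\in S$ against the club $E$ of fixed points, the swallowing property of $\bar e$, and choosing $k$ using $|t_\beta|<\cf(\mu)$) matches the paper's, but there are two concrete problems. First, your walk analysis is wrong at the key step: the penultimate point of the $m$-walk from $\epsilon\in t_\beta$ to $\beta^*$ is not $\delta^*$. The correct statement is that this walk agrees with the walk from $\epsilon$ to $\delta^*$ up to its penultimate point $\gamma^m(\delta^*,\epsilon)>\delta^*$ (this needs $\beta^*$ above $\gamma^\otimes=\sup$ of the drop points of the walks to $\delta^*$), and then, because $\delta^*\in e^m_{\gamma^m(\delta^*,\epsilon)}$ forces $C_{\delta^*}[m]\subs e^m_{\gamma^m(\delta^*,\epsilon)}$, the walk steps from $\gamma^m(\delta^*,\epsilon)$ directly to $\beta^*$, skipping $\delta^*$ entirely. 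So $\gamma^m(\beta^*,\epsilon)=\gamma^m(\delta^*,\epsilon)$, not $\delta^*$; no elementarity argument makes the penultimate step equal $\delta^*$. You also omit the cofinality requirement on $\beta^*$ (that $\cf(\beta^*)$ exceed $\cf(\mu)+\sup\{|e^m_{\gamma^m(\delta^*,\epsilon)}|:\epsilon\in t_\beta\}$, available from the club-guessing technology), which is what guarantees $\beta^*\in\nacc(e^m_{\gamma^m(\delta^*,\epsilon)})$ and hence that $\eta^+=\sup\{\eta^m(\beta^*,\epsilon):\epsilon\in t_\beta\}$ is strictly below $\beta^*$; without this, $\psi_{k,m}$ simply fails.

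Second, and decisively, your mechanism for producing the quantifier prefix does not deliver the statement as quantified. The prefix $(\exists^*\eta)(\exists\eta^+)(\exists^{\stat}\beta^*)(\exists^*\gamma)$ requires, for a \emph{fixed} pair $\eta,\eta^+$, stationarily many $\beta^*$, and for each such \emph{fixed} $\beta^*$, unboundedly many $\gamma$. Varying $\delta^*$ cofinally in $S$ and pigeonholing on $(m,k)$ changes $\beta^*$, $\eta$, and $\eta^+$ simultaneously with $\gamma$, so it only yields a statement of the form ``for every club there exist $\eta,\eta^+,\beta^*,\gamma$ with\dots'', which is strictly weaker. The paper instead produces a \emph{single} witnessing tuple positioned correctly relative to the approximating sequence --- $M_0\cap\mu^+<\eta\leq\eta^+<\beta^*=M_{\beta^*}\cap\mu^+<\delta^*=M_{\delta^*}\cap\mu^+<\gamma$ --- and then reflects: elementarity of $M_{\delta^*}$ gives $(\exists^*\gamma)$ for that fixed $\eta,\eta^+,\beta^*$; the fact that $\beta^*$ is the trace of $M_{\beta^*}$ on $\mu^+$ and lies in a set definable from parameters of $M_{\beta^*}$ gives $(\exists^{\stat}\beta^*)$; and $\eta>M_0\cap\mu^+$ gives $(\exists^*\eta)$. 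This positioning is exactly why one needs $\gamma>\delta^*$ (your $\gamma=\delta^*$ claim would undercut it) and $\eta>\max(C_{\delta^*}[m]\cap\beta^*)>M_0\cap\mu^+$. You would need to add this reflection step to complete the argument.
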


\begin{proof}
We start by defining $x:=\{\mu, S,(\vec{\mu},\vec{f})),\bar{C},\bar{e},\langle t_\beta:\beta<\mu^+\rangle\}$, that is, $x$ consists of the parameters needed to comprehend $\psi$. We then let $\langle M_\alpha:\alpha<\mu^+\rangle$ be a $\mu^+$-approximating sequence over $x$.  It is clear that
the set
\begin{equation}
E:=\{\delta<\mu^+:M_\delta\cap\mu^+=\delta\}
\end{equation}
is closed and unbounded in $\mu^+$, so can fix $\delta\in S$ for which $C_\delta$ guesses $E$.

Choose $\beta$ such that
\begin{equation}
\delta<\min(t_\beta),
\end{equation}
and define
\begin{equation}
\gamma^{\otimes}=\sup\{\gamma^{m,-}(\delta,\epsilon):\epsilon\in t_\beta, m<\omega\}.
\end{equation}

We observe
\begin{equation}
\max\{\gamma^\otimes,M_0\cap\mu^+\}<\delta,
\end{equation}
as $\delta\in E$ and $|t_\beta|+\aleph_0<\cf(\delta)$.

\begin{proposition}
\label{claim1}
We can find $\beta^*<\delta$ and $m<\omega$ such that
\begin{gather}
\label{3.11}\beta^*\in\nacc(C_\delta[m])\cap E,\\
\label{3.12}\cf(\beta^*)>\cf(\mu)+\sup\{|e^m_{\gamma^m(\delta,\epsilon)}|:\epsilon\in t_\beta\},\\
\intertext{and}
\label{3.13}\max\{\gamma^\otimes, M_0\cap\mu^+\}<\max(C_\delta[m]\cap\beta^*).
\end{gather}
\end{proposition}
\begin{proof}
Our first observation is that for any $m<\omega$, we have
\begin{equation}
\cf(\mu)+\sup\{|e^m_{\gamma^m(\delta,\epsilon)}|:\epsilon\in t_\beta\}<\mu
\end{equation}
as $|t_\beta|<\cf(\mu)$.

If $\mu$ is of uncountable cofinality, then we set $m=1$ (recall that $\bar{e}$ is somewhat trivial in this case). Our choice of $\bar{C}$ and $\delta$ tells us that all sufficiently large elements of $\nacc(C_\delta)$ satisfy~(\ref{3.11}) and~(\ref{3.12}), and so we can easily find $\beta^*$ satisfying~(\ref{3.13}) as well.

If the cofinality of $\mu$ is countable, then we note that $t_\beta$ is finite. Thus, we can fix a single $m_0<\omega$ such that
\begin{equation}
\label{3.15}
\langle\beta^m_i(\delta,\epsilon):i<\rho^m_2(\delta,\epsilon)\rangle = \langle \beta^{m_0}_i(\delta,\epsilon):i<\rho^{m_0}_2(\delta,\epsilon)\rangle
\end{equation}
whenever $\epsilon\in t_\beta$ and $m\geq m_0$.

Now choose $m\geq m_0$ such that
\begin{equation}
\mu^+_m\geq\max\{\cf(\gamma^{m_0}(\delta,\epsilon)):\epsilon\in t_\beta\}.
\end{equation}
In light of (\ref{3.15}), we see
\begin{equation}
\mu^+_m\geq\max\{\cf(\gamma^m(\delta,\epsilon)):\epsilon\in t_\beta\}.
\end{equation}

Now $\nacc(C_\delta[m])\cap E$ contains unboundedly many elements of cofinality greater than $\mu_m^+$, so we can find a $\beta^*$ with the required properties.
\end{proof}

\begin{proposition}
\label{claim2}
For any $\epsilon\in t_\beta$, we have
\begin{gather}
\label{3.18}(\forall i<\rho^m_2(\delta,\epsilon)\left[\beta^m_i(\beta^*,\epsilon)=\beta^m_i(\delta,\epsilon)\right],\\
\label{3.19}\rho^2_m(\beta^*,\epsilon)=\rho^2_m(\delta,\epsilon),\\
\label{3.20}\gamma^m(\beta^*,\epsilon)=\gamma^m(\gamma,\epsilon),\\
\intertext{and}
\label{3.21}\beta^*\in\nacc(e^m_{\gamma^m(\delta,\epsilon)}).
\end{gather}
\end{proposition}
\begin{proof}
The first statement holds as $\gamma^{\otimes}<\beta^*<\delta$. Given $\epsilon\in t_\beta$, we note that $\delta\in e^m_{\gamma^m(\delta,\epsilon)}$ by definition, and so $C_\delta[m]\subs e^m_{\gamma^m(\delta,\epsilon)}$ by our choice of $\bar{e}$. It follows that $\beta^*\in e^m_{\gamma^m(\delta,\epsilon)}$ and both~(\ref{3.19}) and~(\ref{3.20}) are immediate. Given that $\beta^*\in e^m_{\gamma^m(\delta,\epsilon)}$ was established earlier in the proof, the statement (\ref{3.21}) follows from~(\ref{3.12}).
\end{proof}

Now that we have isolated~$\beta^*$ and $m$, we define
\begin{align}
\eta:=&\min\{\eta^m(\beta^*,\epsilon):\epsilon\in t_\beta\},\\
\eta^+:=&\sup\{\eta^m(\beta^*,\epsilon):\epsilon\in t_\beta\},\\
\intertext{and}
\gamma:=&\min\{\gamma^m(\beta^*,\epsilon):\epsilon\in t_\beta\}.
\end{align}

\begin{proposition}
We have
\begin{equation}
M_0\cap\mu^+<\eta\leq\eta^+<\beta^*=M_{\beta^*}\cap\mu^+<\delta=M_\delta\cap\mu^+<\gamma.
\end{equation}
\end{proposition}
\begin{proof}
It is clear that $\eta\leq\eta^+\leq\beta^*<\delta$.
By our choice of $\beta^*$ we know
\begin{equation*}
M_0\cap\mu^+<\max(C_\delta[m]\cap\beta^*).
\end{equation*}
Since $C_\delta[m]\subs e^m_{\gamma^m(\delta,\epsilon)}=e^m_{\gamma^m(\beta^*,\epsilon)}$ for each $\epsilon\in t_\beta$, it follows that
\begin{equation*}
M_0\cap\mu^+<\max(C_\delta[m]\cap\beta^*)\leq\eta.
\end{equation*}
 By (\ref{3.20}) and~(\ref{3.21}), we know that $\eta^m(\beta^*,\epsilon)<\beta^*$ for every $\epsilon\in t_\beta$. Since $\cf(\beta^*)>|t_\beta|$, it follows that $\eta^+<\beta^*$.

Finally, (\ref{3.20}) and the definition of $\gamma$ imply that $\delta<\gamma$, and the proof is complete.
\end{proof}

Now let $A:=\{\eta^m(\beta^*,\epsilon):\epsilon\in t_\beta\}$. It is clear that $|A|<\cf(\mu)$ so there must exist a single $k<\cf(\mu)$ such that
\begin{equation*}
\xi_0<\xi_1\text{ in }A\Longrightarrow f_{\xi_0}\restr [k, \cf(\mu))< f_{\xi_1}\restr[k, \cf(\mu)).
\end{equation*}

Given this choice of $k$ together with the preceding work, we see that
\begin{equation}
\psi_{k,m}(\eta,\eta^+,\beta^*,\gamma,\beta)
\end{equation}
holds.

We now finish the proof of the lemma by a standard elementary submodel argument.  Since $x\cup\{\eta,\eta^+,\beta^*,k,m\}\in M_\delta$ and $\gamma>M_\delta\cap\mu^+$, we know
\begin{equation*}
(\exists^*\gamma<\mu^+)(\exists\beta<\mu^+)[\psi_{k, m}(\eta,\eta^+,\beta^*,\gamma,\beta)].
\end{equation*}
Since $x\cup\{\eta,\eta^+, k, m\}\in M_{\beta^*}$ and $\beta^*=M_{\beta^*}\cap\mu^+$, we know
\begin{equation*}
(\exists^{\stat}\beta^*<\mu^+)(\exists^*\gamma<\mu^+)(\exists^\beta<\mu^+)[\psi_{k, m}(\eta,\eta^+,\beta^*,\gamma,\beta)].
\end{equation*}
Finally, since $M_0\cap\mu^+<\eta$ and $x\cup\{k, m\}\in M_0$, we conclude
\begin{equation*}
(\exists^*\eta<\mu^+)(\exists\eta^+<\mu^+)(\exists^{\stat}\beta^*<\mu^+)(\exists^*\gamma<\mu^+)(\exists^\beta<\mu^+)[\psi_{k, m}(\eta,\eta^+,\beta^*,\gamma,\beta)],
\end{equation*}
as required.

\end{proof}

\section{The $\Gamma$ lemma}

This section makes heavy use of scales and the associated combinatorics, so let us begin by assuming $(\vec{\mu},\vec{f})$ is a scale
for some singular cardinal~$\mu$. There is a natural (and well-known) function $\Gamma:[\mu^+]^2\rightarrow\cf(\mu)$ associated
with our scale, namely
\begin{equation}
\Gamma(\alpha,\beta):=\sup\{i<\cf(\mu):f_\beta(i)\leq f_\alpha(i)\}.
\end{equation}
Notice that $\Gamma(\alpha,\beta)$ is defined whenever $\alpha<\beta<\mu^+$ because $(\vec{\mu},\vec{f})$ is a scale. We
make the convention that $\Gamma(\alpha,\alpha)$ is defined and equal to some symbol $\infty$ as well.

We also define a partial function $\Gamma^+:[\mu^+]^2\rightarrow\cf(\mu)$ by setting
\begin{equation}
\Gamma^+(\alpha,\beta):= \max\{i<\cf(\mu):f_\beta(i)\leq f_\alpha(i)\},
\end{equation}
should this maximum exist, and leaving $\Gamma^+$ undefined in all other situations.  Clearly $\Gamma$ and $\Gamma^+$ agree whenever
the latter function is defined.

\begin{lemma}
\label{Gammalemma}
Assume $\mu$ is a singular cardinal and $(\vec{\mu},\vec{f})$ is a scale for $\mu$.
Further assume:
\begin{itemize}
\item $M_0\in M_1\in M_2$ are elementary submodels of $\mathfrak{A}$ of cardinality~$\mu$ such that $M_i\cap\mu^+$ is an initial segment of~$\mu^+$,
\sk
\item $(\vec{\mu},\vec{f})\in M_0$,
\sk
\item $\beta^*=M_0\cap\mu^+$,
\sk
\item $\bar{s}=\langle s_\alpha:\alpha<\mu^+\rangle$ is sequence of pairwise disjoint elements of $[\mu^+]^{<\cf(\mu)}$ with $\bar{s}\in M_0$, and
\sk
\item $t\in [\mu^+]^{<\cf(\mu)}$ with $M_2\cap\mu^+\leq\min(t)$.
\sk
\end{itemize}
Then for all sufficiently large $i<\cf(\mu)$, there are unboundedly many $\alpha<\beta^*$ such that for all $\epsilon_a\in s_\alpha$ and $\epsilon_b\in t$, we have
\begin{equation}
\Gamma^+(\epsilon_a,\epsilon_b)=i,
\end{equation}
but
\begin{equation}
f_{\beta^*}(i+1)<f_{\epsilon_a}(i+1).
\end{equation}
\end{lemma}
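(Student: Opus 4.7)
The plan is an elementary-submodel transfer argument exploiting the nesting $M_0 \in M_1 \in M_2$. The key observation is that every ordinal below any $\mu_i$ lies in $M_0$ (since $\mu+1\subseteq M_0$ follows from $\mu\in M_0$ and $M_0\cap\mu^+$ being an initial segment); in particular $F^+(i):=\max\{f_{\epsilon_b}(i):\epsilon_b\in t\}$ belongs automatically to $M_2$, despite $t$ itself lying above $M_2\cap\mu^+$. This is precisely what lets us treat $F^+(i)$ as an anonymous ordinal $<\mu_i$ when we invoke elementarity.

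First I would set $\beta_1:=M_1\cap\mu^+$, so that $\beta_1\in M_2$, and fix the club $C\in M_0$ provided by Lemma~\ref{scalelemma}; by closure both $\beta^*$ and $\beta_1$ belong to $C$. Pick $i^*<\cf(\mu)$ beyond the exceptional set in the conclusion of Lemma~\ref{scalelemma} at $\beta^*$, and large enough that $f_{\beta_1}(i)<f_{\epsilon_b}(i)$ for every $i\geq i^*$ and every $\epsilon_b\in t$; this is possible because $|t|<\cf(\mu)$ and $\beta_1<\min(t)$. For any $i\geq i^*$, the desired conclusion reduces to finding unboundedly many $\alpha<\beta^*$ such that every $\epsilon\in s_\alpha$ satisfies
\[
f_\epsilon(i)>F^+(i),\qquad f_\epsilon(i+1)>f_{\beta^*}(i+1),\qquad \Gamma(\epsilon,\beta_1)\leq i.
\]
Indeed, the first inequality yields $f_\epsilon(i)\geq f_{\epsilon_b}(i)$, and the third combined with the choice of $i^*$ forces $f_\epsilon(j)<f_{\beta_1}(j)<f_{\epsilon_b}(j)$ for all $j>i$; together these give $\Gamma^+(\epsilon,\epsilon_b)=i$ for every $\epsilon_b\in t$.

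All the parameters of this reduced statement---namely $\bar s,\vec f,\beta^*,\beta_1,f_{\beta^*},f_{\beta_1},i$, and the ordinal $F^+(i)<\mu_i$---lie in $M_2$. By the elementarity of $M_2$, it suffices to verify the universal version in $V$: for each $v<\mu_i$ and each $\nu<\beta^*$, some $\alpha\in(\nu,\beta^*)$ has the three properties (with $v$ replacing $F^+(i)$) holding for every $\epsilon\in s_\alpha$. Instantiating at $v=F^+(i)$ then completes the proof.

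The main obstacle will be this last internal verification. Lemma~\ref{scalelemma} applied at $\beta^*\in C$ supplies unboundedly many $\epsilon<\beta^*$ meeting the two lower-bound conditions for any $v$. The upper-bound condition $\Gamma(\epsilon,\beta_1)\leq i$ is handled by a pigeonhole on the function $\epsilon\mapsto\Gamma(\epsilon,\beta_1)\colon\beta^*\to\cf(\mu)$, which yields, for all sufficiently large $i$, an unbounded set of $\epsilon$'s meeting it. The subtle step will be combining these two density statements and then converting ``unboundedly many good $\epsilon$'s'' into ``unboundedly many $\alpha$'s whose $s_\alpha$ is entirely good'': this relies on the pairwise disjointness of $\bar s$---each bad $\epsilon$ lies in at most one $s_\alpha$, so the bad $\alpha$'s are injectively controlled by the bad $\epsilon$'s---and on the fact that $\alpha<\beta^*$ forces $s_\alpha\subseteq\beta^*$ (since $\alpha\in M_0$ and $|s_\alpha|<\cf(\mu)\subseteq M_0$ give $s_\alpha\subseteq M_0\cap\mu^+=\beta^*$).
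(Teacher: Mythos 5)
Your reduction is sound: the three conditions $f_\epsilon(i)>F^+(i)$, $f_\epsilon(i+1)>f_{\beta^*}(i+1)$, and $\Gamma(\epsilon,\beta_1)\leq i$ (for $\beta_1=M_1\cap\mu^+$) do yield $\Gamma^+(\epsilon,\epsilon_b)=i$ for all $\epsilon_b\in t$, with $f_{\beta_1}$ playing the role that the characteristic function of a Skolem hull plays in the paper, and your observation that $F^+(i)<\mu_i\subseteq M_0$ is the right way to see that the relevant parameters are small. But there is a genuine gap exactly at the step you label ``subtle,'' and it is not a technicality --- it is the whole content of the lemma. You produce two unbounded subsets of $\beta^*$ (one from Lemma~\ref{scalelemma}, one from a pigeonhole on $\epsilon\mapsto\Gamma(\epsilon,\beta_1)$) and need their intersection to be unbounded; two unbounded sets can be disjoint, and nothing in your argument forces cofinally many $\epsilon$ to satisfy the lower bounds at $i,i+1$ \emph{and} the cap above $i+1$ simultaneously. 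Worse, the pigeonhole itself can fail: when $\cf(\beta^*)=\cf(\mu)$ one can modify each $f_\epsilon$ on a single coordinate (preserving the scale) so that $\Gamma(\epsilon,\beta_1)\rightarrow\cf(\mu)$ as $\epsilon\rightarrow\beta^*$, making every set $\{\epsilon<\beta^*:\Gamma(\epsilon,\beta_1)\leq i\}$ bounded. The paper's device for obtaining both properties at once is Corollary~\ref{skolemhulllemma}: build a hull $M\in M_2$ containing a club of $\beta^*$, pass to $N=\Sk_{\mathfrak{A}}(M\cup\mu_i)$, and use that $\Ch^{\vec{\mu}}_N$ agrees with $\Ch^{\vec{\mu}}_M$ above $i$. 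Elementarity of $N$ applied to the scale lemma produces unboundedly many suitable $\alpha\in N\cap\beta^*$, and for these the upper bound is automatic because $s_\alpha\subseteq N$ gives $f_{\epsilon_a}\restr[i+1,\cf(\mu))\leq \Ch^{\vec{\mu}}_N\restr[i+1,\cf(\mu))=\Ch^{\vec{\mu}}_M\restr[i+1,\cf(\mu))<f_{\epsilon_b}\restr[i+1,\cf(\mu))$.

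Your final conversion from ``unboundedly many good $\epsilon$'' to ``unboundedly many $\alpha$ with $s_\alpha$ entirely good'' is also invalid. Unboundedly many good $\epsilon$'s is perfectly compatible with unboundedly many bad ones (for a fixed $i$ there will in general be cofinally many $\epsilon<\beta^*$ with $f_\epsilon(i)\leq v$), so it may happen that every $s_\alpha$ with $\alpha$ large contains a bad element; the injection of bad $\alpha$'s into bad $\epsilon$'s controls nothing. The paper avoids this by first thinning $\bar{s}$ to an unbounded $J\in M_0$ on which $f_{\min(s_\alpha)}\restr[i_0,\cf(\mu))\leq f_\epsilon\restr[i_0,\cf(\mu))$ for all $\epsilon\in s_\alpha$, and then applying Lemma~\ref{scalelemma} to the derived scale $g_\alpha=f_{\min(s_\alpha)}$, so that the lower bounds propagate from $\min(s_\alpha)$ to all of $s_\alpha$. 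Some form of both devices --- the normalization of $\bar{s}$ and the hull $\Sk_{\mathfrak{A}}(M\cup\mu_i)$ --- appears unavoidable, and neither is present in your sketch.
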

\begin{proof}
Our first observation is that it suffices to prove the lemma under the following assumptions about $\bar{s}$:
\begin{itemize}
\sk
\item $\alpha<\beta\Longrightarrow \sup(t_\alpha)<\min(t_\beta)$,
\sk
\item $\alpha\leq\min(t_\alpha)$, and
\sk
\item there is an $i_0<\cf(\mu)$ such that for all $\alpha<\mu^+$,
\begin{equation}
f_{\min(t_\alpha)}\restr [i_0,\cf(\mu))\leq f_\epsilon\restr [i_0,\cf(\mu))\text{ for all }\epsilon\in t_\alpha.
\end{equation}
\end{itemize}
The point is that given a sequence $\bar{s}$ as in the assumptions of the lemma, we can find an unbounded $J\subs \mu^+$ such that $\langle s_\alpha:\alpha\in J\rangle$ enjoys the three additional properties we want. (For the last property, we need to use that $|s_\alpha|<\cf(\mu)$ for each $\alpha$.)  Thus, there will be such a $J$ in $M_0$, and obtaining the conclusion for $\langle s_\alpha:\alpha\in J\rangle\in M_0$ is enough.

Next, we observe that if we define $\vec{g}=\langle g_\alpha:\alpha<\mu^+\rangle$
where
\begin{equation}
g_\alpha:= f_{\min(t_\alpha)},
\end{equation}
then $(\vec{\mu},\vec{g})$ forms a scale for~$\mu$. This new scale is definable from parameters in~$M_0$, and hence $(\vec{\mu},\vec{g})$ is an element of $M_0$ as well.
In particular, we can apply Lemma~\ref{scalelemma} to $(\vec{\mu},\vec{g})$ in the model $M_0$. Taken in conjunction with the fact that $\beta^*=M_0\cap\mu^+$, we see that there is an $i_1<\cf(\mu)$ such that whenever $i_1\leq i<\cf(\mu)$, $\xi_0<\mu_i$, and $\xi_1<\mu_{i+1}$,
 \begin{equation}
 \label{sentence}
 (\exists^*\alpha<\beta^*)[f_{\min(t_\alpha)}(i)>\xi_0\wedge f_{\min(t_\alpha)}(i+1)>\xi_1].
\end{equation}

The main part of our argument relies on an application of Lemma~\ref{skolemhulllemma}.  Let $C$ be a closed unbounded subset of $\beta^*$ of order-type $\cf(\beta^*)$, and define
\begin{equation}
x=\{\mu,(\vec{\mu},\vec{f}),\bar{s},\beta^*\}\cup\cf(\mu)\cup C.
\end{equation}
We define $M$ to be the Skolem hull of $x$ in the structure $\mathfrak{A}$.

Note that $M$ is of cardinality $\max\{\cf(\beta^*),\cf(\mu)\}$, and $M\in M_2$ as $M$ can be computed in $M_2$ by taking the hull of $x$ using the restrictions of the Skolem functions to $M_1$.  Let $g$ denote the characteristic function of $M$ in $\vec{\mu}$, that is,
\begin{equation}
g=\Ch^{\vec{\mu}}_M.
\end{equation}
It is clear that
\begin{equation}
g\in M_2\cap\prod_{i<\cf(\mu)}\mu_i.
\end{equation}
Let $i_2<\cf(\mu)$ be least with $|M|<\mu_{i_2}$, so whenever $i_2\leq i<\cf(\mu)$ we have
\begin{equation}
g(i)=\sup(M\cap\mu_i).
\end{equation}
Note as well that $C\subs M$ and so $M\cap\beta^*$ is unbounded in $\beta^*$.

Since $g\in M_2\cap\prod_{i<\cf(\mu)}\mu_i$, it follows that $g<^*f_\gamma$ whenever $M_2\cap\mu^+\leq\gamma<\mu^+$.  Since $|t|<\cf(\mu)$, we can fix  $i_3<\cf(\mu)$ such that
\begin{equation}
g\restr [i_3,\cf(\mu))<f_{\epsilon}\restr [i_3,\cf(\mu))\text{ for all }\epsilon\in t.
\end{equation}

Now define
\begin{equation}
i^*=\max\{i_0,i_1,i_2,i_3\}.
\end{equation}
We claim that whenever $i^*\leq i<\cf(\mu)$, there are unboundedly many $\alpha<\beta^*$ for which the conclusion of the lemma holds.

Let such an~$i$ be given, and define
\begin{equation}
N:=\Sk_{\mathfrak{A}}(M\cup\mu_i),
\end{equation}
and
\begin{equation}
h:=\Ch^{\vec{\mu}}_{N}.
\end{equation}
Recall that by Corollary~\ref{skolemhulllemma}, we know
\begin{equation}
\label{samerestriction}
g\restr [i+1,\cf(\mu))= h\restr[i+1,\cf(\mu)).
\end{equation}

Now let us define
\begin{equation}
\xi_0:=\sup\{f_\epsilon(i):\epsilon\in t\},
\end{equation}
and
\begin{equation}
\xi_1:= f_{\beta^*}(i+1).
\end{equation}
Both $\xi_0$ and $\xi_1$ are in $N$ --- $\xi_0$ gets in as $\mu_i\subs N$, while $\xi_1$ is in $N$ because $\beta^*$ and the other parameters needed to define it are in $N$.

The next two claims constitute the heart of the matter:

\begin{claim}
Suppose $\alpha\in N\cap\beta^*$ satisfies
\begin{itemize}
\item $f_{\min(s_\alpha)}(i)>\xi_0$, and
\sk
\item $f_{\min(s_\alpha)}(i+1)>\xi_1$.
\sk
\end{itemize}
Then for all $\epsilon_a\in s_\alpha$, we have
\begin{itemize}
\sk
\item $\Gamma^+(\epsilon_a,\epsilon_b)=i$ for all $\epsilon_b\in t$, and
\sk
\item $f_{\beta^*}(i+1)<f_{\epsilon_a}(i+1)$.
\sk
\end{itemize}
\end{claim}
\begin{proof}
Let $\alpha\in N\cap\beta^*$ satisfy the assumptions of the claim, and fix $\epsilon_a\in s_\alpha$.
Since $i_0\leq i$, we know
\begin{equation}
f_{\beta^*}(i+1)=\xi_1<f_{\min(s_\alpha)}(i+1)\leq f_{\epsilon_a}(i+1),
\end{equation}
and therefore $\Gamma(\epsilon_a,\beta^*)\neq i$.

Now fix $\epsilon_b\in t$. Since $i_0\leq i$, we know
\begin{equation}
f_{\epsilon_b}(i)\leq \xi_0 < f_{\min(s_\alpha)}(i)\leq f_{\epsilon_a}(i),
\end{equation}
and therefore
\begin{equation}
\label{first}
\Gamma^+(\epsilon_a,\epsilon_b)= i.
\end{equation}

Since $s_\alpha\in N$ and $|s_\alpha|<\cf(|\mu|)\subs N$, we know that $\epsilon_a\in N$.  Since $i\geq i_2$, it follows that
\begin{equation}
f_{\epsilon_a}\restr [i+1,\cf(\mu))\leq h\restr [i+1,\cf(\mu)).
\end{equation}
Since $i\geq i_3$, a glance at~(\ref{samerestriction}) shows us that
\begin{equation}
f_{\epsilon_a}\restr [i+1,\cf(\mu))\leq g\restr [i+1,\cf(\mu))<f_{\epsilon_b}\restr [i+1,\cf(\mu)),
\end{equation}
and therefore
\begin{equation}
\label{second}
\Gamma(\epsilon_a,\epsilon_b)\leq i.
\end{equation}
The conjunction of~(\ref{first}) and~(\ref{second}) finishes the proof.
\end{proof}
\begin{claim}
The set of $\alpha\in N\cap\beta^*$ satisfying the hypotheses of the preceding claim is unbounded in $\beta^*$.
\end{claim}
\begin{proof}
The scale $(\vec{\mu},\vec{g})$ is in $N$, and so~(\ref{sentence}) holds in $N$ by elementarity.  We have explicitly ensured that $N\cap\beta^*$ is unbounded in $\beta^*$ (as we demanded $C\subs M\subs N$), and the result follows immediately after another application of elementarity.
\end{proof}
\end{proof}

\section{Main Theorem}
We come now to the proof of the main theorem stated in the introduction.

\begin{theorem}[Main Theorem]
\label{mainthm}
Let $\mu$ be a singular cardinal. There is a function
\begin{equation}
D:[\mu^+]^2\rightarrow \mu^+\times\mu^+\times\cf(\mu)
 \end{equation}
 such that whenever $\langle t_\alpha:\alpha<\mu^+\rangle$ is a family of pairwise disjoint members of $[\mu^+]^{<\cf(\mu)}$, there are stationary subsets $S$ and $T$ of $\mu^+$ such that
 whenever
 \begin{equation}
 \langle\alpha,\beta,\delta\rangle \in S\circledast T\times \cf(\mu),
 \end{equation}
  there are $\alpha<\beta<\mu^+$ such that
\begin{equation}
D\restr t_\alpha\times t_\beta\text{ is constant with value }\langle\alpha^*,\beta^*,\delta\rangle.
\end{equation}
\end{theorem}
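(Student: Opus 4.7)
The plan is to define $D$ using the generalized minimal walks from Section~2 together with the scale function $\Gamma^+$, then cascade the two technical lemmas just proved. Fix a scale $(\vec\mu,\vec f)$ for $\mu$, the generalized $C$-sequence $\bar e$ from the preceding section, and a surjection $\pi:\cf(\mu)\to\cf(\mu)$ whose every fibre is unbounded in $\cf(\mu)$. For $\xi<\zeta<\mu^+$, let $m^*(\xi,\zeta)$ denote the canonical stabilization level from~\eqref{largeenoughm}, and define
\begin{equation*}
D(\xi,\zeta):=\bigl\langle \eta^{m^*}(\xi,\zeta),\; \beta^{\sharp}(\xi,\zeta),\; \pi(\Gamma^+(\xi,\zeta))\bigr\rangle,
\end{equation*}
where $\beta^{\sharp}(\xi,\zeta)$ is a canonical ``upper vertex'' read off the stabilized walk (for instance, a distinguished intermediate $\beta^{m^*}_i(\xi,\zeta)$ characterized by a maximality property with respect to $\bar e$) chosen so that $\eta^{m^*}(\xi,\zeta)<\beta^{\sharp}(\xi,\zeta)<\zeta$ always holds. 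The surjection $\pi$ is present because Lemma~\ref{Gammalemma} only controls $\Gamma^+$ at all \emph{sufficiently large} $i<\cf(\mu)$, and composing through~$\pi$ lets us hit every prescribed $\delta\in\cf(\mu)$ from a tail.

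Given a pairwise disjoint family $\langle t_\beta:\beta<\mu^+\rangle$, apply Lemma~\ref{mainlemma} to extract $m<\omega$ and $k<\cf(\mu)$ realizing the configuration $\psi_{k,m}$ with the pattern $(\exists^*\eta)(\exists\eta^+)(\exists^{\stat}\beta^*)(\exists^*\gamma)(\exists\beta)$. Take
\begin{equation*}
T=\{\beta^*<\mu^+:(\exists \eta,\eta^+)(\exists^*\gamma)(\exists\beta)\,\psi_{k,m}(\eta,\eta^+,\beta^*,\gamma,\beta)\};
\end{equation*}
this is stationary by the lemma. The stationary set $S$ is obtained by upgrading the ``$\exists^*\eta$'' to ``$\exists^{\stat}\eta$'' via reflection along a further $\mu^+$-approximating chain over all the data: since the $\eta$-projection of the witness set is definable from the parameter tuple $x$, a club of initial segments of the chain yields stationarily many admissible $\alpha^*$. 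For each $\langle\alpha^*,\beta^*,\delta\rangle\in S\circledast T\times\cf(\mu)$, pick $\beta$ from the $\psi_{k,m}$-witness for $\beta^*$ (so $t_\beta$ sits high above $\beta^*$ with $\eta^m(\beta^*,\epsilon)\in[\eta,\eta^+]$ for every $\epsilon\in t_\beta$), then build three nested elementary submodels $M_0\in M_1\in M_2$ of $\mathfrak{A}$ with $x,\alpha^*\in M_0$, with $\beta^*=M_0\cap\mu^+$, and with $t_\beta\subseteq\mu^+\setminus M_2$. Lemma~\ref{Gammalemma}, applied to $\bar s=\bar t$ and $t=t_\beta$, produces unboundedly many $\alpha<\beta^*$ such that $\Gamma^+(\epsilon_a,\epsilon_b)=i$ for all $(\epsilon_a,\epsilon_b)\in t_\alpha\times t_\beta$, at any $i$ above the lemma's threshold. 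Choose such an $i$ with $\pi(i)=\delta$ and with $t_\alpha\subseteq(\eta^+,\beta^*)$. Proposition~\ref{minimalprop}(2), together with $\eta^m(\beta^*,\epsilon)\leq\eta^+<\min(t_\alpha)$ for every $\epsilon\in t_\beta$, forces the $m$-walk from $\zeta$ to $\xi$ to pass through $\beta^*$ for every $(\xi,\zeta)\in t_\alpha\times t_\beta$, pinning $\beta^{\sharp}(\xi,\zeta)=\beta^*$ and (by the scale-bracketing clause in $\psi_{k,m}$) $\eta^{m^*}(\xi,\zeta)=\alpha^*$ uniformly, while the third coordinate is $\pi(i)=\delta$ by construction. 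Thus $D\restr t_\alpha\times t_\beta$ is constantly $\langle\alpha^*,\beta^*,\delta\rangle$.

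The main obstacle I expect is the \emph{coordination} between the two lemmas: Lemma~\ref{mainlemma} pins down the upper vertex $\beta^*$ and the bracket $[\eta,\eta^+]$ containing all the relevant $\eta^m(\beta^*,\epsilon)$'s, while Lemma~\ref{Gammalemma} pins down the scale coordinate~$\delta$, and these must be realized simultaneously by a single $\alpha<\beta$. This forces a delicate elementary-submodel architecture in which $\beta^*=M_0\cap\mu^+$ is compatible with the chosen $\alpha^*$ and with the walk-reading $\beta^{\sharp}=\beta^*$; the bracketing clause in the definition of $\psi_{k,m}$ is specifically what makes $\eta^{m^*}(\xi,\zeta)$ constant (equal to $\alpha^*$) across $t_\alpha\times t_\beta$, and this is the part that ``earns its keep'' here. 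A secondary difficulty is upgrading ``$\exists^*\eta$'' to a genuine stationary $S$; this requires revisiting the proof of Lemma~\ref{mainlemma} along a club of models over the setup. Finally, because $D$ is defined via the stabilized level $m^*(\xi,\zeta)$ rather than a fixed~$m$, one must check that this level coincides with the specific $m$ produced by Lemma~\ref{mainlemma} for the relevant pairs, which is immediate since that lemma selects $m$ above the stabilization threshold.
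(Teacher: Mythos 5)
Your definition of $D$ is where the argument breaks down, most seriously in the first coordinate. You output $\eta^{m^*}(\xi,\zeta)$ directly and assert that the bracketing clause of $\psi_{k,m}$ makes it constantly equal to $\alpha^*$ on $t_\alpha\times t_\beta$. It does not: for $\epsilon_b\in t_\beta$ the ordinals $\eta^m(\beta^*,\epsilon_b)$ genuinely vary with $\epsilon_b$ --- the clause only traps them in an interval $[\eta_b,\eta_b^+]$ and controls their $f$-values above $k$ --- and in the intended configuration they all lie strictly below $\alpha^*$ (one has $\eta_b\leq\eta^m(\beta^*,\epsilon_b)\leq\eta_b^+<\alpha^*$), so they could never equal it. This is precisely the difficulty the paper's construction is built to overcome: after locating $\beta^*(\alpha,\beta)=c_{m(\alpha,\beta)}(\alpha,\beta)$ (the first point of the $m$-walk where $\Gamma(\alpha,\cdot)$ changes value) and reading off $\eta^*(\alpha,\beta)=\eta^m(\beta^*(\alpha,\beta),\beta)$, one performs a \emph{second} minimal walk, from $\alpha$ down to $\eta^*(\alpha,\beta)$, again stopped at the first change of $\Gamma(\eta^*(\alpha,\beta),\cdot)$; the first coordinate of $D$ is that stopping point. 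Making all these second walks stop at the same $\alpha^*$ even though their targets $\eta^*(\epsilon_a,\epsilon_b)$ vary is exactly what the ``$\eta$-candidate'' and ``works for'' machinery (Lemma~\ref{uglyclaim}) accomplishes, by arranging $\Gamma^+(\eta,\beta^m_i(\alpha^*,\epsilon_a))=i_a$ for both $\eta\in\{\eta_b,\eta_b^+\}$ and hence for everything trapped between them. Nothing in your proposal plays this role, and without it the first coordinate is neither constant on rectangles nor capable of realizing a prescribed stationary set of values. Relatedly, your plan to upgrade ``$\exists^*\eta$'' to a stationary $S$ by reflection is neither needed nor viable; in the paper $S$ comes from a second instance of the $(\exists^{\stat}\beta^*)$ clause of Lemma~\ref{mainlemma} (with parameters $\eta_a,\eta_a^+$), while the unboundedly many $\eta$'s are used to build the disjoint family of $\eta$-candidates fed into Lemma~\ref{Gammalemma}.

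Two further steps fail as stated. First, to pin $\beta^\sharp(\epsilon_a,\epsilon_b)=\beta^*$ you must apply Lemma~\ref{Gammalemma} with $t$ equal to the set of \emph{all} intermediate walk points $\beta^m_i(\beta^*,\epsilon_b)$, not with $t=t_\beta$: one needs $\Gamma(\epsilon_a,\cdot)$ to hold the value $i_b$ along the entire upper portion of the walk and to change only upon arrival at $\beta^*$ (this is the content of~(\ref{5.23}) and~(\ref{5.24})); controlling $\Gamma^+(\epsilon_a,\epsilon_b)$ alone does not determine where the walk's $\Gamma$-value first changes. Likewise $\bar{s}$ must be $\langle t_\alpha:\alpha\in J\rangle$ for the coordinated set $J$, not the whole family. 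Second, the walk level cannot be taken to be the stabilization level $m^*(\xi,\zeta)$ of the pair: the $m$ produced by Lemma~\ref{mainlemma} is chosen from club-guessing data at a point $\delta$ of $S^{\mu^+}_{\aleph_1}$ and has no reason to coincide with the stabilization level of an arbitrary pair $(\epsilon_a,\epsilon_b)$, nor can it be recovered from the pair. The paper solves this by encoding $m$ together with $\delta$ into the value $\Gamma(\alpha,\beta)$ via a map $\iota:\cf(\mu)\rightarrow\omega\times\cf(\mu)$ with unbounded fibers, so that Lemma~\ref{Gammalemma} can be used to select a coordinate $i_b$ with $\iota(i_b)=\langle m,\delta\rangle$; your $\pi$ handles only $\delta$.
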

\begin{proof}
Let $(\vec\mu,\vec f)$ be a scale for~$\mu$, and fix a generalized $C$-sequence
 $\bar{e}$ as in Lemma~\ref{mainlemma}.
Let $\iota:\cf(\mu)\rightarrow \omega\times\cf(\mu)$ be a function such that for any $m<\omega$
 and $\delta<\cf(\mu)$, there are unboundedly many $i<\cf(\mu)$ with $\iota(i)=\langle m,\delta\rangle$.

Given $\alpha<\beta$, we let $m$ and $\delta$ denote functions defined by the recipe
\begin{equation}
\iota(\Gamma(\alpha,\beta))=\langle m(\alpha,\beta),\delta(\alpha,\beta)\rangle.
\end{equation}

Following~\cite{nsbpr} and~\cite{819}, given an natural number $m$, we let $c_m(\alpha,\beta)$ be defined as $\beta^{m}_i(\alpha,\beta)$, where $i$ is the least number for which
\begin{equation}
\Gamma(\alpha,\beta)\neq\Gamma(\alpha,\beta^{m}_i(\alpha,\beta)).
\end{equation}
Note that $c_m(\alpha,\beta)$ is always defined when $\alpha<\beta$ because of our convention that $\Gamma(\alpha,\alpha)$ is equal to $\infty$.

We now define
\begin{equation}
\beta^*(\alpha,\beta)=c_{m(\alpha,\beta)}(\alpha,\beta)
\end{equation}
and
\begin{equation}
\eta^*(\alpha,\beta)=\eta^{m(\alpha,\beta)}(\beta^*(\alpha,\beta),\beta),
\end{equation}
where $\eta^m$ is as in~(\ref{etadef}), interpreted in the context of a generalized $C$-system.

If $\eta^*(\alpha,\beta)<\alpha$, we define
\begin{equation}
\alpha^*(\alpha,\beta)= c_{m(\alpha,\beta)}(\eta^*(\alpha,\beta),\alpha),
\end{equation}
and then set
\begin{equation}
D(\alpha,\beta):=\langle\alpha^*(\alpha,\beta),\beta^*(\alpha,\beta),\delta(\alpha,\beta)\rangle.
\end{equation}
If $\eta^*(\alpha,\beta)\geq\alpha$ we define $D(\alpha,\beta)$ arbitrarily.

The informal description associated with the preceding definition is much clearer than the notation
 required to write it down precisely. Given $\alpha<\beta$, we compute $D(\alpha,\beta)$ in the following manner.
   First, we use $\Gamma$ and $\iota$ to obtain a natural number $m$ and an ordinal $\delta<\cf(\mu)$.
      The number~$m$ tells us which piece of the generalized $C$-system $\bar{e}$ we will be using for our minimal walks,
       while $\delta$ appears in the final output of $D$.  The next step is to take the $m$-walk from $\beta$ down to
        $\alpha$ until we reach a spot $\beta^*$ where $\Gamma(\alpha,\beta^*)$ is different from
         $\Gamma(\alpha,\beta)$ --- this is the ordinal $\beta^*(\alpha,\beta)$.
           Given $\beta^*$, we proceed as in the preceding section and isolate the ordinal $\eta^m(\beta^*,\beta)$,
            which we name $\eta^*=\eta^*(\alpha,\beta)\leq\beta^*$.  If it happens that $\eta^*<\alpha$, then we
             $m$-walk from $\alpha$ down to $\eta^*$ until we reach a point $\alpha^*$ where $\Gamma(\eta^*,\alpha^*)$
              is different from $\Gamma(\eta^*,\alpha)$. The function $D$ now returns the value
               $\langle\alpha^*,\beta^*,\delta\rangle$.

The rest of the proof consists in showing that the function $D$ has the required properties,
 so let us assume $\langle t_\alpha:\alpha<\mu^+\rangle$ is a pairwise disjoint collection of members
  of $[\mu^+]^{<\cf(\mu)}$.  After a bit of culling and re-indexing, we may assume that
\begin{gather}
\alpha\leq\min(t_\alpha),\\
\intertext{and}
\alpha<\beta\Longrightarrow\sup(t_\alpha)<\min(t_\beta).
\end{gather}

We now apply Lemma~\ref{mainlemma} to obtain $m<\omega$ and $k<\cf(\mu)$ such that
\begin{multline}
(\exists^*\eta<\mu^+)(\exists\eta^+<\mu^+)(\exists^{\stat}\beta^*<\mu^+)\\
(\exists^*\gamma<\mu^+)(\exists\beta<\mu^+)[\psi_{k,m}(\eta,\eta^+,\beta^*,\gamma,\beta)],
\end{multline}
and then fix ordinals $\eta_a\leq\eta_a^+$ such that
\begin{equation}
(\exists^{\stat}\alpha^*<\mu^+)\\
(\exists^*\gamma<\mu^+)(\exists\alpha<\mu^+)[\psi_{k,m}(\eta_a,\eta_a^+,\alpha^*,\gamma,\alpha)].
\end{equation}

The next definition and claim are quite technical, but they are critical for our argument.

\begin{definition}
We say that $\{\eta_b, \eta_b^+\}$ is an {\em $\eta$-candidate} if
\begin{itemize}
\item $\eta_a^+<\eta_b\leq\eta^+_b<\mu^+$, and
\sk
\item $(\exists^{\stat}\beta^*<\mu^+)(\exists^*\gamma<\mu^+)(\exists\beta<\mu^+)[\psi_{k, m}(\eta_b,\eta_b^+,\beta^*,\gamma,\beta)]$.
\sk
\end{itemize}
We say that $\{\eta_b,\eta_b^+\}$ {\em works for} $\{\alpha^*,\alpha\}$ at $i_a$ if
\begin{itemize}
\sk
\item $\eta_b\leq\eta_b^+<\alpha^*<\min(t_\alpha)$,
\sk
\item $i_a<\cf(\mu)$,
\sk
\item $\Gamma^+(\eta,\beta^m_i(\alpha^*,\epsilon))=i_a$ for all $\epsilon\in t_\alpha$, $i<\rho^m_2(\alpha^*,\epsilon)$,  and $\eta\in\{\eta_b,\eta^+_b\}$, and
    \sk
\item $f_{\eta_b}(i_a+1)>f_{\alpha^*}(i_a+1)$.
\end{itemize}
\end{definition}

The next lemma says that there are many $\eta$-candidates that will be suitable for our construction.

\begin{lemma}
\label{uglyclaim}
For all sufficiently large $i_a<\cf(\mu)$, we can find an $\eta$-candidate $\{\eta_b,\eta_b^+\}$ such that
\begin{multline}
(\exists^{\stat}\alpha^*<\mu^+)(\exists^*\gamma<\mu^+)(\exists\alpha<\mu^+)\\
[\psi_{k, m}(\eta_a,\eta_a^+,\alpha^*,\gamma,\alpha)\text{ and $\{\eta_b,\eta^+_b\}$ works for $\{\alpha^*,\alpha\}$ at $i_a$}].
\end{multline}
\end{lemma}
\begin{proof}
Our choice of $k$ and $m$ tells us that there are unboundedly many $\eta$ for which we can find an $\eta^+$ such that $\{\eta,\eta^+\}$ is an $\eta$-candidate, so we can construct a sequence $\langle s_\alpha:\alpha<\mu^+\rangle$ of pairwise disjoint subsets of $\mu^+$ such that each $s_\alpha$ is an $\eta$-candidate.

Let $M_0\in M_1\in M_2$ be elementary submodels of $\mathfrak{A}$ as in Lemma~\ref{Gammalemma}, chosen so that
\begin{equation}
  \{\eta_a,\eta_a^+,\langle s_\alpha:\alpha<\mu^+\rangle\}\in M_0,
\end{equation}
 and, for $\alpha^*=M_0\cap\mu^+$,
\begin{equation}
\label{statmany}
(\exists^*\gamma<\mu^+)(\exists\alpha<\mu^+)[\psi_{k,m}(\eta_a,\eta_a^+,\alpha^*,\gamma,\alpha)].
\end{equation}
This can be done because there are stationarily~$\alpha^*$ satisfying~(\ref{statmany}).

Choose $\gamma\geq M_2\cap\mu^+$ and $\alpha<\mu^+$ such that $\psi_{k,m}(\eta_a,\eta_a^+,\alpha^*,\gamma,\alpha)$ holds, and define
\begin{equation}
t:=\{\beta^m_i(\alpha^*,\epsilon_a):\epsilon_a\in t_\alpha\text{ and }i<\rho^m_2(\alpha^*,\epsilon_a)\}.
\end{equation}
We can now apply Lemma~\ref{Gammalemma} to conclude that for all sufficiently large $i_a<\cf(\mu)$, there is an $\alpha<\alpha^*$ such that
\begin{equation}
\Gamma^+(\eta,\epsilon)=i_a\text{ for all $\eta\in s_\alpha$ and $\epsilon\in t$},
\end{equation}
and
\begin{equation}
f_{\alpha^*}(i_a+1)<f_{\eta}(i_a+1)\text{ for all $\eta\in s_\alpha$}.
\end{equation}
It should be clear that $\{\eta_b,\eta^+_b\}$ works for $\{\alpha^*,\alpha\}$ at $i_a$.  Since $\{\eta_b,\eta^+_b\}\in M_0$, the conclusion of the lemma now follows by a standard elementary submodel argument like that used to finish the proof of Lemma~\ref{mainlemma}.
\end{proof}

In light of the preceding claim,  we can fix $i_a>k$ and an $\eta$-candidate $\{\eta_b,\eta^+_b\}$ such that
\begin{multline}
(\exists^{\stat}\alpha^*<\mu^+)(\exists^*\gamma<\mu^+)(\exists\alpha<\mu^+)\\
[\psi_{k, m}(\eta_a,\eta_a^+,\alpha^*,\gamma,\alpha)\text{ and $\{\eta_b,\eta_b^+\}$ works for $\{\alpha^*,\alpha\}$ at $i_a$}].
\end{multline}

Let us now define
\begin{multline*}
S^:=\{\alpha^*<\mu^+:(\exists^*\gamma<\mu^+)(\exists\alpha<\mu^+)\\
[\psi_{k, m}(\eta_a,\eta_a^+,\alpha^*,\gamma,\alpha)\text{ and $\{\eta_b,\eta_b^+\}$ works for $\{\alpha^*,\alpha\}$ at $i_a$}]\}
\end{multline*}
and
\begin{equation*}
T^*:=\{\beta^*<\mu^+:(\exists^*\gamma<\mu^+)(\exists\beta<\mu^+)[\psi_{k, m}(\eta_b,\eta_b^+,\beta^*,\gamma,\beta)].
\end{equation*}

Our choices make it clear that $S^*$ and $T^*$ are both stationary. We will thin out these sets a bit to obtain the promised stationary sets $S$ and $T$.  To do this, let $x$ consist of those parameters needed to comprehend $\psi_{k,m}$ (see the first line of the proof of Lemma~\ref{mainlemma}), and let
\begin{equation*}
y:= x\cup\{S^*,T^*,\eta_a,\eta^+_a,\eta_b,\eta_b^+\}.
\end{equation*}
Let $\langle M_\delta:\delta<\mu^+\rangle$ be a $\mu^+$-approximating sequence over $y$, and let
\begin{equation*}
E:=\{\delta<\mu^+:M_\delta\cap\mu^+=\delta\}.
\end{equation*}
We define
\begin{equation*}
S:=S^*\cap E,
\end{equation*}
and
\begin{equation*}
T:=T^*\cap E.
\end{equation*}

Now suppose $\langle\alpha^*,\beta^*,\delta\rangle\in S\circledast T\times\cf(\mu)$.
We must produce $\alpha<\beta$ such that $D\restr t_\alpha\times t_\beta$ is constant with value $\langle\alpha^*,\beta^*,\delta\rangle$. To this point, we know
\begin{equation}
\eta_a\leq\eta_a^+<\eta_b\leq\eta_b^+<\alpha^*<\beta^*.
\end{equation}
We choose now $\gamma_b$ and $\beta$ such that
\begin{itemize}
\sk
\item $\psi_{k, m}(\eta_b,\eta_b^+,\beta^*,\gamma_b,\beta)$, and
\sk
\item $M_{\beta^*+2}\cap\mu^+\leq\gamma_b$.
\sk
\end{itemize}
This can be done by the definition of $T$.  Next, we define
\begin{equation}
t:=\{\beta^m_i(\beta^*,\epsilon_b):\epsilon_b\in t_\beta\text{ and }i<\rho^m_2(\beta^*,\epsilon_b)\}.
\end{equation}
By definition, $\gamma_b=\min(t)$ and so $M_{\beta^*+2}\cap\mu^+\leq\min(t)$.

Finally, define $J$ to be the set of all $\alpha<\mu^+$ such that for some $\gamma<\mu^+$, we have
\sk
\begin{itemize}
\item $\psi_{k, m}(\eta_a,\eta_a^+,\alpha^*,\gamma,\alpha)$, and
\sk
\item $\{\eta_b,\eta_b^+\}$ works for $\{\alpha^*,\alpha\}$ at $i_a$.
\sk
\end{itemize}

It is clear that $J$ is unbounded in~$\mu^+$ since $\alpha^*\in S$. Furthermore, the set $J$ is definable in the model $M_{\alpha^*+1}$, hence $J\in M_{\beta^*}$.

 Thus, the objects $M_{\beta^*}\in M_{\beta^*+1}\in M_{\beta^*+2}$, $\beta^*$, $\langle t_\alpha:\alpha\in J\rangle$, and $t$ satisfy the hypotheses of Lemma~\ref{Gammalemma}.

We conclude that for all sufficiently large $i_b<\cf(\mu)$, there are unboundedly many  $\alpha\in J\cap\beta^*$ such that for all $\epsilon_a\in t_\alpha$ and all $\epsilon\in t$,
\begin{equation}
\label{5.23}\Gamma^+(\epsilon_a,\epsilon)=i_b,
\end{equation}
and
\begin{equation}
\label{5.24}f_{\beta^*}(i_b+1)<f_{\epsilon_a}(i_b+1).
\end{equation}
In particular, we can choose $\alpha$ and $i_b$ in such a way that the above conditions are satisfied, and in addition such that
\begin{equation}
\iota(i_b)=\langle m, \delta\rangle
\end{equation}
Notice that for any $\epsilon_a\in t_\alpha$ and $\epsilon_b\in t_\beta$, we have
\begin{equation}
\eta_a\leq\eta^+_a<\eta_b\leq\eta_b^+<\alpha^*<\epsilon_a<\beta^*<\epsilon_b.
\end{equation}
The rest of the proof consists of show that $D\restr t_\alpha\times t_\beta$ is constant with value $\langle\alpha^*,\beta^*,\delta\rangle$, so assume now that $\epsilon_a\in t_\alpha$ and $\epsilon_b\in t_\beta$.

Right away, we see that $\Gamma(\epsilon_a,\epsilon_b)=i_b$ since $\epsilon_a\in t_\alpha$ and $\epsilon_b\in t$.
As an immediate corollary, it follows that
\begin{gather}
m(\epsilon_a,\epsilon_b)=m\\
\intertext{and}
\label{deltastar}\delta(\epsilon_a,\epsilon_b)=\delta.
\end{gather}

\begin{claim}
\label{betastar}
$\beta^*(\epsilon_a,\epsilon_b)=\beta^*.$
\end{claim}
\begin{proof}
Since $\psi_{k, m}(\epsilon_b,\epsilon_b^+,\beta^*,\gamma_b,\beta)$ holds, we know
\begin{equation}
\eta_b\leq\eta^m(\beta^*,\epsilon_b)\leq\eta_b^+.
\end{equation}
Since
\begin{equation}
\eta_b^+<\alpha^*<\epsilon_a<\beta^*,
\end{equation}
it follows that
\begin{equation}
\beta^m_i(\epsilon_a,\epsilon_b)=\beta^m_i(\beta^*,\epsilon_b)\text{ for }i\leq\rho^m_2(\beta^*,\epsilon_b).
\end{equation}
In particular,
\begin{equation}
\beta^m_i(\epsilon_a,\epsilon_b)\in t\text{ for }i<\rho^m_2(\beta^*,\epsilon_b),
\end{equation}
and
\begin{equation}
\beta^m_{\rho^m_2(\beta^*,\epsilon_b)}(\epsilon_a,\epsilon_b)=\beta^*.
\end{equation}
Given our choice of $\alpha$ (see~(\ref{5.23}) and~(\ref{5.24})), we obtain
\begin{equation}
\beta^*(\epsilon_a,\epsilon_b)=c_m(\epsilon_a,\epsilon_b)=\beta^*,
\end{equation}
as required.
\end{proof}

Note that the preceding claim tells us
\begin{equation}
\eta^*(\epsilon_a,\epsilon_b)=\eta^m(\beta^*,\epsilon_b)
\end{equation}
as well.

\begin{claim}
\label{alphastar}
$\alpha^*(\epsilon_a,\epsilon_b)=\alpha^*$.
\end{claim}
\begin{proof}
Our choice of $\{\eta_b,\eta_b^+\}$ tells us that
\begin{equation*}
\eta_a\leq\eta_a^+<\eta_b\leq\eta^*(\epsilon_a,\epsilon_b)=\eta^m(\beta^*,\epsilon_b)\leq\eta_b^+<\alpha^*<\epsilon_a,
\end{equation*}
and therefore
\begin{equation}
\label{agree}
\beta^m_i(\eta^*(\epsilon_a,\epsilon_b),\epsilon_a)=\beta^m_i(\alpha^*,\epsilon_a)\text{ for all }i\leq\rho^m_2(\alpha^*,\epsilon_a).
\end{equation}
Furthermore, we know
\begin{equation}
\label{restrictions}
f_{\eta_b}\restr [k,\cf(\mu))\leq f_{\eta^*(\epsilon_a,\epsilon_b)}\restr [k,\cf(\mu))\leq f_{\eta^+}\restr [k,\cf(\mu)),
\end{equation}
and this implies
\begin{gather}
\label{part1} f_{\eta_b}(i_a)\leq f_{\eta^*(\epsilon_a,\epsilon_b)}(i_a)\\
\label{part2} f_{\eta_b}(i_a+1)\leq f_{\eta^*(\epsilon_a,\epsilon_b)}(i_a+1)\\
\intertext{and}
\label{part3} f_{\eta^*(\epsilon_a,\epsilon_b)}\restr [i_a+1,\cf(\mu))\leq f_{\eta_b^+}\restr [i_a+1,\cf(\mu)).
\end{gather}

For $i<\rho^m_2(\alpha^*,\epsilon_a)$, we know that $\{\eta_b,\eta_b^+\}$ works for $\{\alpha^*,\alpha\}$ at~$i_a$, and
so
\begin{equation}
\label{thirdwing}
\Gamma^+\bigl(\eta_b,\beta^m_i(\alpha^*,\epsilon_a)\bigr)=\Gamma^+\bigl(\eta_b^+,\beta^m_i(\alpha^*,\epsilon_a)\bigr)=i_a,
\end{equation}
and
\begin{equation}
\label{fourthwing}
f_{\alpha^*}(i_a+1)<f_{\eta^b}(i_a+1).
\end{equation}

From~(\ref{agree}), (\ref{restrictions}), (\ref{part1}), (\ref{part3}), and~(\ref{thirdwing}) we conclude
\begin{equation}
\label{firstwing}
\Gamma\bigl(\eta^*(\epsilon_a,\epsilon_b),\beta^m_i(\eta^*(\epsilon_a,\epsilon_b),\epsilon_a)\bigr)=i_a\text{ for all }i<\rho^m_2(\alpha^*,\epsilon_a).
\end{equation}

Since
\begin{equation}
\beta^m_{\rho^2_m(\alpha^*,\epsilon_a)}(\eta^*(\epsilon_a,\epsilon_b),\epsilon_a)=\alpha^*,
\end{equation}
it follows from~(\ref{part2}), (\ref{fourthwing}), and~(\ref{restrictions}) that
\begin{equation}
\label{secondwing}
\Gamma\bigl(\eta^*(\epsilon_a,\epsilon_b), \beta^m_{\rho^2_m(\alpha^*,\epsilon_a)}(\eta^*(\epsilon_a,\epsilon_b),\epsilon_a)\bigr)\neq i_a.
\end{equation}
From~(\ref{firstwing}), (\ref{secondwing}), and the definition of $c_{m(\epsilon_a,\epsilon_b)}$, we conclude that
\begin{equation}
\alpha^*(\epsilon_a,\epsilon_b)=c_{m(\epsilon_a,\epsilon_b)}(\eta^*(\epsilon_a,\epsilon_b),\epsilon_a)
=\beta^m_{\rho^2_m(\alpha^*,\epsilon_a)}(\eta^*(\epsilon_a,\epsilon_b),\epsilon_a)=\alpha^*,
\end{equation}
as required.
\end{proof}

Putting Claim~\ref{betastar}, Claim~\ref{alphastar}, and~(\ref{deltastar}) together, we see
\begin{equation}
D(\alpha,\beta)=\langle\alpha^*,\beta^*,\delta\rangle
\end{equation}
and the proof is complete.
\end{proof}
\section{Consequences}

We turn our attention now to consequences of Theorem~\ref{mainthm} and pick up the discussion of the introduction once more.  Our first result gives the promised equivalence of~(\ref{first1}) and~(\ref{second1}), and also establishes an even stronger fact.

\begin{theorem}
\label{upgradetheorem}
The following are equivalent for a singular cardinal~$\mu$:
\begin{align}
\label{mu+}&\pr_1(\mu^+,\mu^+,\mu^+,\cf(\mu))\hphantom{xxxxxxxxxxxxxxxxxxxxxxxxxxxxxxxxxxxxxxxx}\\
\label{mu}&\pr_1(\mu^+,\mu^+,\mu,\cf(\mu))\\
\label{theta}&\pr_1(\mu^+,\mu^+,\theta,\cf(\mu))\text{ for arbitrarily large }\theta<\mu.
\end{align}

\end{theorem}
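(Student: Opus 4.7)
The implications $(\ref{mu+})\Rightarrow(\ref{mu})\Rightarrow(\ref{theta})$ are immediate: given a witness $f:[\mu^+]^2\to\sigma$ for $\pr_1(\mu^+,\mu^+,\sigma,\cf(\mu))$ and any $\sigma'\leq\sigma$, compose $f$ with a surjection $\sigma\twoheadrightarrow\sigma'$; the post-composition preserves constancy on any $t_\alpha\times t_\beta$, and every target in $\sigma'$ is pulled back to some target in $\sigma$.

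For $(\ref{theta})\Rightarrow(\ref{mu+})$, fix $\theta$ with $\cf(\mu)\leq\theta<\mu$ and a witness $f:[\mu^+]^2\to\theta$ for $\pr_1(\mu^+,\mu^+,\theta,\cf(\mu))$. Let $D$ from Theorem~\ref{mainthm} have components $\alpha^*,\beta^*,\delta$. The plan is to define
\[
h(x,y):=\Phi(\beta^*(x,y),f(x,y))
\]
for a pairing function $\Phi:\mu^+\times\theta\to\mu^+$ chosen so that for every $\epsilon<\mu^+$ the set $\{\beta^*:\exists\zeta<\theta,\,\Phi(\beta^*,\zeta)=\epsilon\}$ contains a club in $\mu^+$; this would ensure that any stationary $T$ contains a $\beta^*$ from which $\epsilon$ is reachable via some $\zeta$. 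The key technical step is a strengthened Main Theorem: for any pairwise disjoint $\langle t_\alpha\rangle$, there are stationary $S,T$ such that for every $\langle\alpha^*,\beta^*,\delta,\zeta\rangle\in S\circledast T\times\cf(\mu)\times\theta$ there exist $\alpha<\beta$ with both $D\restr t_\alpha\times t_\beta\equiv\langle\alpha^*,\beta^*,\delta\rangle$ and $f\restr t_\alpha\times t_\beta\equiv\zeta$. Examining the proof of Theorem~\ref{mainthm}, for each fixed target triple the family of candidate index pairs is rich --- unboundedly many $\alpha$ for the chosen $\beta$, across stationarily many $\beta^*$ --- so applying $\pr_1$ for $f$ to this family yields the additional $f$-constancy.

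Granting these two ingredients, the verification of $h$ is routine: given $\langle t_\alpha\rangle$ and target $\epsilon<\mu^+$, apply the strengthened Main Theorem to obtain stationary $S,T$; pick $\beta^*\in T$ in the club associated to $\epsilon$ together with $\zeta<\theta$ satisfying $\Phi(\beta^*,\zeta)=\epsilon$; then for any $\alpha^*\in S$ below $\beta^*$ and any $\delta<\cf(\mu)$, the strengthened Main Theorem furnishes $\alpha<\beta$ realizing the quadruple $\langle\alpha^*,\beta^*,\delta,\zeta\rangle$ simultaneously, so $h\restr t_\alpha\times t_\beta\equiv\Phi(\beta^*,\zeta)=\epsilon$. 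The main obstacle I anticipate is the construction of $\Phi$ --- equivalently, producing a family of $\mu^+$ clubs in $\mu^+$ with each point belonging to at most $\theta$ of them --- which I expect to handle by a diagonal construction that exploits a Solovay-style partition of $\mu^+$ into $\mu^+$ stationary pieces together with the $\pr_1$ property of $f$ to distribute $\epsilon$-memberships in the required controlled fashion.
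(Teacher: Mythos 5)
The easy implications are fine, but your argument for $(\ref{theta})\Rightarrow(\ref{mu+})$ has two genuine gaps, and the first is fatal as stated. The pairing function $\Phi$ you ask for cannot exist: you need $\mu^+$ many clubs $C_\epsilon=\{\beta^*:\exists\zeta<\theta\,[\Phi(\beta^*,\zeta)=\epsilon]\}$ in $\mu^+$ while each $\beta^*$ lies in at most $\theta$ of them (since $\Phi(\beta^*,\cdot)$ has domain $\theta$). But $\theta^+\leq\mu<\mu^+=\cf(\mu^+)$, so any $\theta^+$ of these clubs have nonempty intersection, and a point in that intersection lies in $\theta^+>\theta$ of them. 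No diagonal or Solovay-style construction can evade this counting obstruction, so the ``main obstacle'' you flag at the end is an impossibility, not a technicality. The second gap is the ``strengthened Main Theorem'': simultaneous constancy of $D$ and $f$ on the same rectangle $t_\alpha\times t_\beta$ is exactly the kind of statement that requires reworking the construction, not a corollary of the candidate family being ``rich.'' The $\pr_1$ property of $f$ hands you \emph{one} pair $\alpha<\beta$ from a given disjoint family with $f\restr t_\alpha\times t_\beta$ constant; nothing guarantees that this particular pair also realizes the prescribed $D$-triple, since Theorem~\ref{mainthm} only asserts existence of \emph{some} realizing pair for each triple, not that all (or a $\pr_1$-accessible set of) pairs realize it. A further symptom that the approach is off track: you fix a single $\theta$, whereas the hypothesis~(\ref{theta}) supplies witnesses for arbitrarily large $\theta<\mu$, and the paper's proof genuinely needs that cofinal family.

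For comparison, the paper's route avoids both problems. It fixes surjections $g_\beta:\mu\rightarrow\beta$ and colorings $c_i$ witnessing $\pr_1(\mu^+,\mu^+,\theta_i,\cf(\mu))$ for a sequence $\theta_i$ cofinal in $\mu$, and sets
\begin{equation*}
c(\alpha,\beta)=g_{\beta^*(\alpha,\beta)}\bigl(c_{\delta(\alpha,\beta)}(\alpha^*(\alpha,\beta),\beta^*(\alpha,\beta))\bigr).
\end{equation*}
Given a target $\varsigma<\mu^+$, a pigeonhole/Fodor argument on the stationary set $T$ produces a single $\epsilon<\mu$ and a stationary $T^*\subs T$ with $g_{\beta^*}(\epsilon)=\varsigma$ for all $\beta^*\in T^*$ (this is how $\mu^+$ targets are reduced to $\mu$ targets, replacing your impossible $\Phi$); one then chooses $\delta$ with $\epsilon<\theta_\delta$ and applies $c_\delta$ to a disjoint family of \emph{pairs} drawn from $S\circledast T^*$ to find $\alpha^*\in S$, $\beta^*\in T^*$ with $c_\delta(\alpha^*,\beta^*)=\epsilon$. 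The auxiliary coloring is thus evaluated at $(\alpha^*,\beta^*)$, which $D$ already outputs constantly on $t_\alpha\times t_\beta$, so no strengthening of the Main Theorem is needed. I would encourage you to redo the argument along these lines.
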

\begin{proof}
It is easy to see that each statement implies the one following, so we prove that~(\ref{theta}) implies~(\ref{mu+}).
 Fix an increasing sequence of cardinals $\langle \theta_i:i<\cf(\mu)\rangle$ cofinal in $\mu$ such that
  $\pr_1(\mu^+,\mu^+,\theta_i,\cf(\mu))$ holds for each $i<\cf(\mu)$, and let $c_i$ be a coloring witnessing this for $\theta_i$.  Also, we fix for each $\beta<\mu^+$ a function $g_\beta$ mapping $\mu$ onto $\beta$.

Let $D$ be a coloring as in our main theorem, and let $\alpha^*$, $\beta^*$, and $\delta$ denote functions defined by the recipe
\begin{equation}
D(\alpha,\beta)=\langle \alpha^*(\alpha,\beta),\beta^*(\alpha,\beta),\delta(\alpha,\beta)\rangle.
\end{equation}
Finally, define the function $c:[\mu^+]^2\rightarrow \mu^+$ as follows:
\begin{equation}
\label{cdef}
c(\alpha,\beta):= g_{\beta^*(\alpha,\beta)}\left(c_{\delta(\alpha,\beta)}\left(\alpha^*(\alpha,\beta),\beta^*(\alpha,\beta)\right)\right).
\end{equation}

Now suppose $\langle t_\beta:\beta<\mu^+\rangle$ is a sequence of pairwise disjoint elements of $[\mu^+]^{<\cf(\mu)}$, and let $\varsigma<\mu^+$ be arbitrary. We must find $\alpha<\beta$ such that $c\restr t_\alpha\times t_\beta$ is constant with value $\varsigma$.

Fix stationary sets $S$ and $T$ as in the conclusion of our main theorem. An application of Fodor's Theorem allows us to find $\epsilon<\mu$ and stationary $T^*\subs T$ such that $g_{\beta^*}(\epsilon)=\varsigma$ for all $\beta^*\in T^*$.

Next, we construct a sequence $\langle s_\gamma:\gamma<\mu^+\rangle$ of pairwise disjoint elements of $S\circledast T$ such that $\max(s_\zeta)<\min(s_\eta)$ whenever $\zeta<\eta$. This is easily done, as both $S$ and $T^*$ are unbounded in $\mu^+$. Now let $\delta<\cf(\mu)$ be chosen so that $\epsilon<\theta_\delta$.

Our choice of $c_\delta$ provides us with $\zeta<\eta<\mu^+$ such that
\begin{equation}
c_\delta\restr s_\zeta\times s_\eta\text{ is constant with value }\epsilon.
\end{equation}
Now supposing
\begin{gather}
s_\zeta = \{\alpha^*_\zeta,\beta^*_\zeta\}
\intertext{and}
s_\eta = \{\alpha^*_\eta,\beta^*_\eta\},
\end{gather}
we define
\begin{gather}
\alpha^*=\alpha^*_\zeta
\intertext{and}
\beta^*=\beta^*_\eta
\end{gather}
It should be clear that $\langle\alpha^*,\beta^*\rangle\in S\circledast T$.

Our assumptions about $D$ now give us $\alpha<\beta$ such that
\begin{equation}
D\restr t_\alpha\times t_\beta\text{ is constant with value }\langle\alpha^*,\beta^*,\delta\rangle.
\end{equation}
Clearly we can also demand that $\sup(t_\alpha)<\min(t_\beta)$, and now we show
\begin{equation}
c\restr t_\alpha\times t_\beta\text{ is constant with value }\varsigma.
\end{equation}

Given $\epsilon_a\in t_\alpha$ and $\epsilon_b\in t_\beta$, we know
\begin{gather}
\alpha^*(\epsilon_a,\epsilon_b)=\alpha^*,\\
\beta^*(\epsilon_a,\epsilon_b)=\beta^*,\\
\intertext{and}
\delta(\epsilon_a,\epsilon_b)=\delta.
\end{gather}
A glance at~(\ref{cdef}) tells us
\begin{equation}
c(\epsilon_a,\epsilon_\beta)= g_{\beta^*}(c_\delta(\alpha^*,\beta^*)),
\end{equation}
and now the result follows immediately as $\beta^*\in T^*$ and $c_\delta(\alpha^*,\beta^*)=\epsilon$.
\end{proof}

The main theorem of our paper~\cite{ideals} established, among other things, that if $\mu$ is singular of uncountable
 cofinality, then $\pr_1(\mu^+,\mu^+,\mu^+,\cf(\mu))$ holds unless the stationary subsets of $\mu^+$ possess many instances of
 stationary reflection.  In the case where the cofinality of~$\mu$ is countable, we were only able to get the analogous result
 with $\pr_1(\mu^+,\mu^+,\mu,\cf(\mu))$, but this defect is repaired now by the equivalence of~(\ref{mu}) and~(\ref{mu+}).
This allows to state the following theorem without restrictions on the cofinality of~$\mu$:

\begin{theorem}
If $\mu$ is singular and $\pr_1(\mu^+,\mu^+,\mu^+,\cf(\mu))$ fails then there is a $\theta<\mu$ such that
for any sequence $\langle S_\alpha:\alpha<\sigma\rangle$ of stationary subsets of $S^{\mu^+}_{\geq\theta}$ of length $\sigma<\cf(\mu)$,
there is an ordinal $\delta<\mu^+$ such that $S_\alpha\cap\delta$ is stationary in $\delta$ for all $\alpha<\sigma$.
\end{theorem}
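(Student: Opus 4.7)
The plan is to deduce this theorem as a nearly immediate corollary of the main theorem of~\cite{ideals} together with the equivalence of~(\ref{mu}) and~(\ref{mu+}) established in Theorem~\ref{upgradetheorem}. Recall the dichotomy proven in~\cite{ideals}: for any singular~$\mu$, either there exists a~$\theta<\mu$ yielding the reflection conclusion stated in the theorem, or else a certain $\pr_1$-style principle holds. The precise form of the principle depends on the cofinality of~$\mu$: for $\cf(\mu)$ uncountable it is $\pr_1(\mu^+,\mu^+,\mu^+,\cf(\mu))$, while for $\cf(\mu)$ countable it is only the weaker $\pr_1(\mu^+,\mu^+,\mu,\cf(\mu))$, as noted explicitly in the discussion immediately preceding the theorem statement.

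So assume that $\pr_1(\mu^+,\mu^+,\mu^+,\cf(\mu))$ fails. In the case that $\cf(\mu)$ is uncountable, the uncountable-cofinality half of~\cite{ideals} applies directly and delivers the desired $\theta<\mu$ together with the reflection conclusion. In the case that $\cf(\mu)$ is countable, Theorem~\ref{upgradetheorem} tells us that $\pr_1(\mu^+,\mu^+,\mu,\cf(\mu))$ must also fail; the countable-cofinality half of~\cite{ideals} then supplies the required $\theta<\mu$ and the reflection conclusion.

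The only real content here is the use of Theorem~\ref{upgradetheorem} to bridge the gap in the countable cofinality case; the genuine combinatorial work has already been carried out in~\cite{ideals} and in the proof of the Main Theorem above. Consequently there is no serious obstacle to overcome in this argument beyond citing the two ingredients correctly.
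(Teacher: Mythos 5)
Your proposal is correct and matches the paper's own proof exactly: the paper likewise cites the dichotomy from~\cite{ideals} (which yields $\pr_1(\mu^+,\mu^+,\mu^+,\cf(\mu))$ for uncountable cofinality but only $\pr_1(\mu^+,\mu^+,\mu,\cf(\mu))$ for countable cofinality) and uses the equivalence of~(\ref{mu}) and~(\ref{mu+}) from Theorem~\ref{upgradetheorem} to close the countable-cofinality gap. Nothing further is needed.
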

\begin{proof}
This is restatement the contrapositive of parts (2) and (3) of the main theorem of~\cite{ideals}, in light of the equivalence of~(\ref{mu}) and~(\ref{mu+}).
\end{proof}

It is still open whether $\pr_1(\mu^+,\mu^+,\mu^+,\cf(\mu))$ can fail for a singular cardinal. The above theorem tells us that obtaining such a consistency result will necessarily involve some considerable large cardinals.  In light of the implications in~(\ref{implications}), we see that this is true for the consistency of $\mu^+$ being Jonsson, or the consistency of $\mu^+\rightarrow[\mu^+]^2_{\mu^+}$ as well.

Our next result is a relative of one of the conclusions derived in~\cite{upgradesi}.

\begin{theorem}
\label{stattheorem}
The following are equivalent for a singular cardinal~$\mu$ and cardinal $\theta\leq\mu^+$:
\begin{enumerate}
\item $\pr_1(\mu^+,\mu^+,\theta,\cf(\mu))$.
\sk
\item There is a function $c:[\mu^+]^2\rightarrow\theta$ such that for any unbounded subsets $A$ and $B$ of $\mu^+$, \begin{equation}
    \label{abuse1}
    \theta\subs\ran(c\restr A\circledast B).
    \end{equation}
\item There is a function $d:[\mu^+]^2\rightarrow\theta$ such that for any stationary subsets $S$ and $T$ of $\mu^+$,
    \begin{equation}
    \label{abuse}
    \theta\subs\ran(d\restr S\circledast T).
    \end{equation}
\end{enumerate}
\end{theorem}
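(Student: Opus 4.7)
My plan is to prove the cycle $(1) \Rightarrow (2) \Rightarrow (3) \Rightarrow (1)$. The first two implications are routine; the third is the substantive one and uses the Main Theorem directly.

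For $(1) \Rightarrow (2)$, let $f : [\mu^+]^2 \to \theta$ witness $\pr_1(\mu^+,\mu^+,\theta,\cf(\mu))$. Given unbounded $A, B \subs \mu^+$ and $\epsilon < \theta$, I would build by recursion on $\xi < \mu^+$ a pairwise disjoint sequence of doubletons $t_\xi = \{a_\xi, b_\xi\}$ with $a_\xi \in A$, $b_\xi \in B$, $a_\xi < b_\xi$, and $\sup(t_\eta) < a_\xi$ for $\eta < \xi$; this is possible since $A$ and $B$ are both unbounded. Since $2 < \cf(\mu)$, the sequence is a legitimate input to $\pr_1$, which yields $\alpha < \beta$ such that $f \restr t_\alpha \times t_\beta$ is constant with value $\epsilon$. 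In particular $f(a_\alpha, b_\beta) = \epsilon$ with $a_\alpha \in A$, $b_\beta \in B$, and $a_\alpha < b_\beta$, placing $\epsilon$ in $\ran(f \restr A \circledast B)$. Since $\epsilon < \theta$ was arbitrary, $(2)$ holds with $c := f$.

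For $(2) \Rightarrow (3)$, observe that any stationary subset of $\mu^+$ is unbounded, so $d := c$ witnesses $(3)$.

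The heart of the matter is $(3) \Rightarrow (1)$. Let $d$ witness $(3)$, and let $D : [\mu^+]^2 \to \mu^+ \times \mu^+ \times \cf(\mu)$ be the coloring from the Main Theorem, with coordinate projection functions $\alpha^*(\alpha,\beta)$, $\beta^*(\alpha,\beta)$, $\delta(\alpha,\beta)$. I define $c : [\mu^+]^2 \to \theta$ by setting
\[
c(\alpha,\beta) := \begin{cases} d(\alpha^*(\alpha,\beta),\beta^*(\alpha,\beta)) & \text{if } \alpha^*(\alpha,\beta) < \beta^*(\alpha,\beta), \\ 0 & \text{otherwise.} \end{cases}
\]
To verify that $c$ witnesses $\pr_1(\mu^+,\mu^+,\theta,\cf(\mu))$, let a pairwise disjoint family $\langle t_\alpha : \alpha < \mu^+ \rangle$ in $[\mu^+]^{<\cf(\mu)}$ and $\epsilon < \theta$ be given. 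The Main Theorem supplies stationary $S, T \subs \mu^+$ with the property that every triple in $S \circledast T \times \cf(\mu)$ is realized constantly by $D$ on some $t_\alpha \times t_\beta$. Applying $(3)$ to $S$ and $T$, there exist $\alpha^* \in S$ and $\beta^* \in T$ with $\alpha^* < \beta^*$ and $d(\alpha^*,\beta^*) = \epsilon$. Fixing any $\delta_0 < \cf(\mu)$, the Main Theorem produces $\alpha < \beta$ with $D \restr t_\alpha \times t_\beta$ constantly $\langle \alpha^*, \beta^*, \delta_0 \rangle$. Since $\alpha^* < \beta^*$, we conclude that $c \restr t_\alpha \times t_\beta$ is constantly $d(\alpha^*,\beta^*) = \epsilon$, as required. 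No step here is truly difficult: the Main Theorem has already done the heavy lifting, and the real content is to observe that it reduces the $\pr_1$ statement to the stationary formulation $(3)$ by composition. The only mild subtlety is the arbitrary choice of $c$ when $\alpha^*(\alpha,\beta) \not< \beta^*(\alpha,\beta)$, but this is benign since the Main Theorem constrains only triples coming from $S \circledast T$, for which $\alpha^* < \beta^*$ is automatic.
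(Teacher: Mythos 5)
Your proposal is correct and follows essentially the same route as the paper: the cycle $(1)\Rightarrow(2)\Rightarrow(3)\Rightarrow(1)$, with $(1)\Rightarrow(2)$ obtained by feeding pairwise disjoint doubletons drawn from $A\circledast B$ into the $\pr_1$ coloring, $(2)\Rightarrow(3)$ trivial, and $(3)\Rightarrow(1)$ by composing $d$ with the first two coordinates of the Main Theorem's coloring $D$. Your write-up is in fact slightly more careful than the paper's, which leaves the verification of the composed coloring to the reader.
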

We are abusing notation a little bit in~(\ref{abuse1}) and~(\ref{abuse}), as elements of $S\circledast T$ are technically ordered pairs and not pairs of ordinals, but the meaning should be clear.  Also note that~(2) is relative of the relation
\begin{equation}
\label{prel}
\mu^+\nrightarrow[(\mu^+:\mu^+)]^2_\theta
 \end{equation}
 (see~\cite{ehmr}), which states that there is a function $f:[\mu^+]^2\rightarrow\theta$ such that for any unbounded subsets $A$ and $B$ of $\mu^+$ and any $\varsigma<\theta$, there are $\alpha\in A$ and $\beta\in B$ with $f(\alpha,\beta)=\varsigma$. The difference between~(2) and~(\ref{prel}) is very slight --- in~(\ref{prel}), it is not
 required that $\alpha$ is less than $\beta$, while we need this in order to apply Theorem~\ref{mainthm}.

\begin{proof}
The fact that (1) implies (2) is well-known (we did something similar in the proof of Theorem~\ref{upgradetheorem}), but we give it for completeness.  We show something a little bit stronger, namely that any function witnessing~(1) also works for~(2).

Thus, let $c$ witness that $\pr_1(\mu^+,\mu^+,\theta,\cf(\mu))$ holds, let $\varsigma<\theta$ be given,  and let $S$ and $T$ be stationary subsets of~$\mu^+$.  Construct a sequence $\langle t_\alpha:\alpha<\mu^+\rangle$ of pairwise disjoint elements of $S\circledast T$  such that $\max(t_\alpha)<\min(t_\beta)$ whenever $\alpha<\beta$, and then fix $\alpha<\beta$ such that
$c\restr t_\alpha\times t_\beta$ is constant with value~$\varsigma$. Let $\alpha^*=\min(t_\alpha)$ and $\beta^*=\max(t_\beta)$. Then $\langle\alpha^*,\beta^*\rangle\in S\circledast T$ is as required.

It is clear that (2) implies (3), so assume $d$ be as in~(3), and let $D$ be the function from Theorem~\ref{mainthm}, with
\begin{equation}
D(\alpha,\beta)=\langle\alpha^*(\alpha,\beta),\beta^*(\alpha,\beta),\delta(\alpha,\beta)\rangle.
\end{equation}
We define a function $f:[\mu^+]^2\rightarrow\theta$ by
\begin{equation}
f(\alpha,\beta)=c(\alpha^*(\alpha,\beta),\beta^*(\alpha,\beta)),
\end{equation}
and the verification that $f$ has the required properties is straightforward.
\end{proof}

We will present only one application of the preceding theorem here, but we note that we can use Theorem~\ref{stattheorem} to greatly simplify the proof of the main result of~\cite{535}. It also allows us
to solve the main problem left open by~\cite{819}. We intend to present this work elsewhere, as it is joint with Shelah.

We start with a lemma, proved by a standard argument

\begin{lemma}
\label{otherscale}
Let $(\vec{\mu},\vec{f})$ be a scale for the singular cardinal~$\mu$, and suppose $A\subs\mu^+$ is unbounded.
Then
\begin{equation}
(\forall^*i<\cf(\mu))(\exists^*\xi<\mu_i)(\exists^*\alpha\in A)[f_\alpha(i)=\xi].
\end{equation}
\end{lemma}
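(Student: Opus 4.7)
The plan is to argue by contradiction, using a pigeonhole argument together with the fact that the scale $\vec{f}$ exhausts $\prod_{i<\cf(\mu)}\mu_i$ modulo $<^*$.

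First I would suppose the conclusion fails, so the set $I:=\{i<\cf(\mu):\neg(\exists^*\xi<\mu_i)(\exists^*\alpha\in A)[f_\alpha(i)=\xi]\}$ is unbounded in $\cf(\mu)$. For each $i\in I$, unpacking the negation yields an ordinal $\xi_0(i)<\mu_i$ such that for every $\xi\in[\xi_0(i),\mu_i)$ the set $\{\alpha\in A:f_\alpha(i)=\xi\}$ is bounded below $\mu^+$. Because $\mu^+$ is regular and $\mu_i<\mu<\mu^+$, we can take the sup of these bounds over $\xi\in[\xi_0(i),\mu_i)$ to obtain $\beta^{**}(i)<\mu^+$ such that $f_\alpha(i)<\xi_0(i)$ whenever $\alpha\in A$ and $\alpha\geq\beta^{**}(i)$. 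Setting $\beta^*:=\sup\{\beta^{**}(i):i\in I\}$, we still have $\beta^*<\mu^+$ since $|I|\leq\cf(\mu)<\mu^+$. Thus for every $\alpha\in A\cap[\beta^*,\mu^+)$ and every $i\in I$, we have $f_\alpha(i)<\xi_0(i)$.

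Now I would use the scale property to force a contradiction. Define $h\in\prod_{i<\cf(\mu)}\mu_i$ by $h(i):=\xi_0(i)$ for $i\in I$ and $h(i):=0$ otherwise. Since $(\vec{\mu},\vec{f})$ is a scale, there is some $\gamma<\mu^+$ with $h<^*f_\gamma$. Since $A$ is unbounded in $\mu^+$, pick $\alpha\in A$ with $\alpha>\max(\beta^*,\gamma)$, so in particular $f_\gamma<^*f_\alpha$. For all sufficiently large $i<\cf(\mu)$ we then have both $h(i)<f_\gamma(i)$ and $f_\gamma(i)<f_\alpha(i)$; choosing any such $i$ that also lies in $I$ (possible since $I$ is unbounded), we get
\begin{equation*}
f_\alpha(i)<\xi_0(i)\leq h(i)<f_\gamma(i)<f_\alpha(i),
\end{equation*}
which is the desired contradiction.

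The argument uses no minimal walks, club-guessing, or elementary submodels; the only real point is the bookkeeping needed to see that the accumulated ``bad'' bounds $\beta^{**}(i)$ fit below a single $\beta^*<\mu^+$ (which relies on $\mu_i<\mu<\mu^+$ and on $|I|<\mu^+$), after which the scale exhaustion property of $\vec{f}$ finishes the job in one line. Consequently I do not foresee any serious obstacle, which matches the author's description of the lemma as being ``proved by a standard argument.''
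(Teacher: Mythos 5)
Your proof is correct and follows essentially the same route as the paper's: contradiction, collecting the unbounded set $I$ of bad coordinates, bounding the exceptional $\alpha$'s by a single $\beta^*<\mu^+$ via a sup over $\mu$-many bounds, and then contradicting the cofinality of the scale with the function $h(i)=\xi_0(i)$. The only differences are cosmetic (you take the sup in two stages rather than one, and you spell out the final contradiction that the paper dismisses as ``easy''), and your bookkeeping is sound.
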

\begin{proof}
The proof is by contradiction, so suppose the conclusion fails for some unbounded $A\subs\mu^+$. Parsing what this means,
we find that there are unboundedly many $i<\cf(\mu)$, for all sufficiently large $\xi<\mu_i$, the set of $\alpha\in A$ with $f_\alpha(i)=\xi$
is bounded in $\mu^+$.

Let $I$ consist of those $i<\cf(\mu)$ for which the above is true, and for $i\in I$ choose $\xi_i$ such that
\begin{equation}
\xi_i\leq\xi<\mu_i\Longrightarrow |\{\alpha\in A:f_\alpha(i)=\xi\}|<\mu^+.
\end{equation}

Given $i\in I$ and $\xi$ with $\xi_i\leq\xi<\mu_i$, we can fix an ordinal $\alpha(\xi,i)<\mu^+$ such that
\begin{equation}
\{\alpha\in A: f_\alpha(i)=\xi\}\subs \alpha(\xi, i),
\end{equation}
and then define
\begin{equation}
\alpha^*:=\sup\{\alpha(\xi, i):i<\cf(\mu), \xi_i\leq\xi<\mu_i\}.
\end{equation}
It is clear that $\alpha^*<\mu^+$ as there are only $\mu$ possibilities for $\xi$ and $i$.

After the dust has settled, we see that if $\alpha\in A$ is greater than $\alpha^*$, then
\begin{equation}
i\in I\Longrightarrow f_\alpha(i)<\xi_i,
\end{equation}
and this easily contradicts our assumption that $(\vec{\mu},\vec{f})$ is a scale.
\end{proof}

The theorem we prove below has many antecedents in the literature, but our result seems to be the first in which the partition relation holding at the successor of the singular cardinal represents an upgrade over those assumed to hold at the smaller cardinals.

\begin{theorem}
\label{lastthm}
Suppose $\mu$ is singular, and there is a scale $(\vec{\mu},\vec{f})$ for $\mu$ such that
\begin{equation}
\label{colonrelation}
\mu_i\nrightarrow[(\mu_i:\mu_i)]^2_{\mu_i}
\end{equation}
for all $i<\cf(\mu)$.
Then $\pr_1(\mu^+,\mu^+,\mu^+,\cf(\mu))$ holds.
\end{theorem}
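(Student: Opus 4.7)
The plan is to build a coloring $c:[\mu^+]^2\to\mu^+$ directly from the Main Theorem's function $D$, using the scale $\vec f$ to reduce queries at $\mu^+$ to queries at the $\mu_i$'s, the assumed colorings $h_i:[\mu_i]^2\to\mu_i$ witnessing $\mu_i\nrightarrow[(\mu_i:\mu_i)]^2_{\mu_i}$, and a fixed surjection $g_\beta:\mu\twoheadrightarrow\beta$ for each $\beta<\mu^+$.  Writing $D(\alpha,\beta)=\langle\alpha^*(\alpha,\beta),\beta^*(\alpha,\beta),\delta(\alpha,\beta)\rangle$, I would set
\[c(\alpha,\beta):=g_{\beta^*(\alpha,\beta)}\Bigl(h_{\delta(\alpha,\beta)}\bigl(f_{\alpha^*(\alpha,\beta)}(\delta(\alpha,\beta)),\,f_{\beta^*(\alpha,\beta)}(\delta(\alpha,\beta))\bigr)\Bigr)\]
on pairs where the two scale entries are distinct, and arbitrarily otherwise.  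The philosophy is that Theorem~\ref{mainthm} lets us prescribe $\alpha^*$, $\beta^*$, and $\delta$ on an entire block $t_\alpha\times t_\beta$, so any color that can be engineered from those three ingredients appears constantly.

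Given pairwise disjoint $\langle t_\alpha:\alpha<\mu^+\rangle$ in $[\mu^+]^{<\cf(\mu)}$ and a target $\varsigma<\mu^+$, the extraction procedure runs as follows.  Apply the Main Theorem to produce stationary $S,T\subseteq\mu^+$.  Since each $g_{\beta^*}$ is onto $\beta^*$, the map $\beta^*\mapsto\min g_{\beta^*}^{-1}(\varsigma)$ partitions the tail of $T$ into $\mu<\mu^+$ pieces, so pigeonholing yields $\epsilon<\mu$ with $T^{**}:=\{\beta^*\in T:g_{\beta^*}(\epsilon)=\varsigma\}$ stationary.  Now apply Lemma~\ref{otherscale} to both $S$ and $T^{**}$: for all sufficiently large $i<\cf(\mu)$ the sets
\[\Xi_S^i:=\{\xi<\mu_i:\{\alpha^*\in S:f_{\alpha^*}(i)=\xi\}\text{ is unbounded in }\mu^+\}\]
and $\Xi_T^i$ (defined analogously from $T^{**}$) are both unbounded in $\mu_i$.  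Pick such an $i$ with $\epsilon<\mu_i$, invoke $\mu_i\nrightarrow[(\mu_i:\mu_i)]^2_{\mu_i}$ on $\Xi_S^i$ and $\Xi_T^i$ with color $\epsilon$ to obtain distinct $\xi\in\Xi_S^i$, $\xi'\in\Xi_T^i$ with $h_i(\xi,\xi')=\epsilon$, and finally choose $\alpha^*\in S$ and $\beta^*\in T^{**}$ with $f_{\alpha^*}(i)=\xi$, $f_{\beta^*}(i)=\xi'$, and $\alpha^*<\beta^*$ (possible because both selection sets are unbounded in $\mu^+$).

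At this point $\langle\alpha^*,\beta^*,i\rangle\in S\circledast T\times\cf(\mu)$, so Theorem~\ref{mainthm} produces $\alpha<\beta<\mu^+$ with $D\restr t_\alpha\times t_\beta$ constantly equal to $\langle\alpha^*,\beta^*,i\rangle$; unpacking $c$ then gives $c(\epsilon_a,\epsilon_b)=g_{\beta^*}(h_i(\xi,\xi'))=g_{\beta^*}(\epsilon)=\varsigma$ for every $\epsilon_a\in t_\alpha$ and $\epsilon_b\in t_\beta$, which is what $\pr_1$ demands.  The main obstacle will be arranging all of this with a single coordinate $i<\cf(\mu)$: it must simultaneously exceed $\epsilon$, spread the scale values of $S$ and $T^{**}$ unboundedly in $\mu_i$, and be usable as the $\delta$-slot of $D$.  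Lemma~\ref{otherscale} is tailor-made for the second requirement, and the first and third are cheap, so once this alignment is secured the partition relation on $\mu_i$ completes the argument.
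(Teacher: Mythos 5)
Your argument is correct, and it reaches the conclusion by a genuinely more direct route than the paper. The paper never touches the function $D$ in this proof: it first quotes Conclusion~4.1A of~\cite{cardarith} to get a coloring $c:[\mu^+]^2\rightarrow\cf(\mu)$ that is rainbow on unbounded rectangles (via Theorem~\ref{stattheorem}(2)), uses $c$ to select the coordinate $i^*$, composes with the $d_{i^*}$'s and the scale to produce a $\mu$-valued coloring that is rainbow on unbounded rectangles, and only then invokes Theorem~\ref{stattheorem} ((2)$\Rightarrow$(1)) and Theorem~\ref{upgradetheorem} --- each of which applies the Main Theorem internally --- to climb from ``unbounded rectangles, $\mu$ colors'' to $\pr_1(\mu^+,\mu^+,\mu^+,\cf(\mu))$. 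You instead fold everything into a single application of the Main Theorem: the $\delta$-slot of $D$ replaces the auxiliary coloring $c$ as the mechanism for prescribing the coordinate $i$, the surjections $g_{\beta^*}$ (borrowed in effect from the proof of Theorem~\ref{upgradetheorem}) handle the upgrade to $\mu^+$ colors, and Lemma~\ref{otherscale} is applied to the stationary sets $S$ and $T^{**}$ rather than to arbitrary unbounded $A$ and $B$. The shared core is the same --- find one $i$ where the scale spreads the relevant sets unboundedly in $\mu_i$ and feed the value sets to the hypothesis $\mu_i\nrightarrow[(\mu_i:\mu_i)]^2_{\mu_i}$ --- but your packaging eliminates the external dependence on $\pr_1(\mu^+,\mu^+,\cf(\mu),\cf(\mu))$ and collapses two invocations of $D$ into one. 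What the paper's modular route buys in exchange is that Theorems~\ref{stattheorem} and~\ref{upgradetheorem} are isolated as reusable transfer principles of independent interest; your proof is self-contained but monolithic. The only points to make explicit in a written version are the ones you already flag: the coloring must be defined arbitrarily when $f_{\alpha^*}(\delta)=f_{\beta^*}(\delta)$, the chosen $i$ must satisfy $\epsilon<\mu_i$ so that $\epsilon$ is a legitimate color for $h_i$, and the ordered-pair convention in $(\mu_i:\mu_i)$ must match the order in which the two scale values are fed to $h_i$ --- all of which your extraction respects.
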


\begin{proof}
By Conclusion 4.1A on page~67 of~\cite{cardarith}, we know that $\pr_1(\mu^+,\mu^+,\cf(\mu),\cf(\mu))$ holds, so we can fix a function $c:[\mu^+]^2\rightarrow\cf(\mu)$ as in part~(2) of Theorem~\ref{stattheorem} with $\cf(\mu)$ standing in for~$\theta$.

For each $i<\cf(\mu)$, let $d_i:[\mu_i]^2\rightarrow\mu_i$ be a witness for~(\ref{colonrelation}), and define a function
$d:[\mu^+]^2\rightarrow\mu$ by
\begin{equation}
d(\alpha,\beta)=d_{c(\alpha,\beta)}(f_\alpha(c(\alpha,\beta)),f_\beta(c(\alpha,\beta))).
\end{equation}

Given $\varsigma<\mu$ and unbounded subsets $A$ and $B$ of $\mu^+$, we will find $\langle\alpha,\beta\rangle$ in $A\circledast B$
 with $d(\alpha,\beta)=\varsigma$.  This implies $\pr_1(\mu^+,\mu^+,\mu,\cf(\mu))$ by Theorem~\ref{stattheorem}, which
 in turn gives us $\pr_1(\mu^+,\mu^+,\mu^+,\cf(\mu))$ by Theorem~\ref{upgradetheorem}.

We choose $i^*<\cf(\mu)$ so that
\begin{gather}
\varsigma<\mu_{i^*},\\
(\exists^*\zeta<\mu_{i^*})(\exists^*\alpha\in A)[f_\alpha(i^*)=\zeta],\\
\intertext{and}
(\exists^*\eta<\mu_{i^*})(\exists^*\beta\in B)[f_\alpha(i^*)=\eta].
\end{gather}
Clearly this is possible by Lemma~\ref{otherscale}.

Next, we define
\begin{gather}
A^*:=\{\zeta<\mu_{i^*}:(\exists^*\alpha\in A)[f_\alpha(i^*)=\zeta]\}\\
\intertext{and}
B^*:=\{\eta<\mu_{i^*}:(\exists^*\beta\in B)[f_\alpha(i^*)=\eta]\}.
\end{gather}

Both of these sets are unbounded in~$\mu_{i^*}$, and so we can find $\alpha^*\in A^*$ and $\beta^*\in B^*$ with
\begin{equation}
d_{i^*}(\alpha^*,\beta^*)=\varsigma.
\end{equation}

Now define
\begin{gather}
A^\dagger=\{\alpha\in A: f_\alpha(i^*)=\alpha^*\}\\
\intertext{and}
B^\dagger=\{\beta\in B: f_\beta(i^*)=\beta^*\}.
\end{gather}

Both of these sets are unbounded in~$\mu^+$, and so we can find $\langle \alpha,\beta\rangle$ in $A\circledast B$ with $c(\alpha,\beta)=i^*$.

We find now that
\begin{multline}
d(\alpha,\beta)=d_{c(\alpha,\beta)}(f_\alpha(c(\alpha,\beta)),f_\beta(c(\alpha,\beta)))\\
=d_{i^*}(f_\alpha(i^*),f_\beta(i^*))=d_{i^*}(\alpha^*,\beta^*)=\varsigma,
\end{multline}
as required.
\end{proof}

We come now to a result promised at the end of the introduction. The hypothesis $\pp(\mu)=\mu^+$ refers to Shelah's pseudo-power function, but we will not elaborate as we need only one easily understood consequence of this assumption. The reader can consult~\cite{cardarith} for the definition of $\pp$, and the author's~\cite{myhandbook} contains the proof of the relevant facts.

\begin{corollary}
\label{lastcor}
$\pr_1(\mu^+,\mu^+,\mu^+,\cf(\mu))$ holds for any singular $\mu$ with $\pp(\mu)=\mu^+$.
\end{corollary}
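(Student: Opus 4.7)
The plan is to combine Theorem~\ref{lastthm} with the standard consequence of $\pp(\mu)=\mu^+$ that one can arrange a scale for $\mu$ whose entries $\mu_i$ are successors of regular cardinals. So the first step is to invoke the $\pp$-theory from~\cite{cardarith} (or the exposition in~\cite{myhandbook}): under $\pp(\mu)=\mu^+$, there is a scale $(\vec{\mu},\vec{f})$ for $\mu$ in which each $\mu_i$ has the form $\lambda_i^+$ for some regular cardinal $\lambda_i<\mu$. (If $\cf(\mu)=\aleph_0$ this is essentially immediate; for uncountable cofinality one passes to a cofinal subsequence using the fact that $\pp(\mu)=\mu^+$ forces $\mu$ to be the true supremum of appropriately chosen pcf-generators.)

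The second step is to observe that for every regular cardinal $\lambda$, the negative partition relation
\begin{equation}
\lambda^+\nrightarrow[(\lambda^+:\lambda^+)]^2_{\lambda^+}
\end{equation}
is a theorem of {\sf ZFC}. This is a classical rectangular negative partition result (it follows, for instance, from Todor\v{c}evi\'c's work on minimal walks on $\lambda^+$, or from Shelah's $\pr_1(\lambda^+,\lambda^+,\lambda^+,\cf(\lambda))$ at successors of regulars, which trivially implies the rectangular form we need). Applied with $\lambda=\lambda_i$, this gives
\begin{equation}
\mu_i\nrightarrow[(\mu_i:\mu_i)]^2_{\mu_i}
\end{equation}
for every $i<\cf(\mu)$.

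The third and final step is simply to feed this scale into Theorem~\ref{lastthm}, whose hypothesis is now verified, and read off the conclusion $\pr_1(\mu^+,\mu^+,\mu^+,\cf(\mu))$. The only real work is in the first step, i.e.\ showing that $\pp(\mu)=\mu^+$ yields a scale all of whose entries are successors of regulars; but this is a standard pcf fact and is exactly the sort of ``one easily understood consequence'' the paragraph before the corollary is alluding to. No new combinatorics is required beyond Theorem~\ref{lastthm} and the classical negative partition relation on successors of regulars.
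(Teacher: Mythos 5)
Your overall architecture matches the paper's: extract from $\pp(\mu)=\mu^+$ a scale whose entries satisfy the rectangular relation $\mu_i\nrightarrow[(\mu_i:\mu_i)]^2_{\mu_i}$, then quote Theorem~\ref{lastthm}. The gap is in your second step. You assert that $\lambda^+\nrightarrow[(\lambda^+:\lambda^+)]^2_{\lambda^+}$ is a ``classical'' {\sf ZFC} theorem for every regular $\lambda$, justified either by Todor{\v{c}}evi{\'c}'s walks or by ``Shelah's $\pr_1(\lambda^+,\lambda^+,\lambda^+,\cf(\lambda))$ at successors of regulars.'' Neither justification holds up. What the minimal-walks machinery gives in {\sf ZFC} at an arbitrary successor of a regular is the \emph{square} relation $\lambda^+\nrightarrow[\lambda^+]^2_{\lambda^+}$; the rectangular version with the maximal number of colors is genuinely harder (already for $\lambda=\omega$ it was a long-standing open problem, not a routine consequence of the 1987 Acta paper). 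And the $\pr_1$ result you invoke does not exist at single successors of regulars: what Shelah actually proves in~\cite{572} is $\pr_1(\kappa^{++},\kappa^{++},\kappa^{++},\kappa)$ for regular $\kappa$, i.e.\ a statement about \emph{double} successors. So as written, your step 2 rests on an unproved (and, at the level of generality you need, unavailable) partition relation.

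The fix is exactly the paper's choice of scale: since double successors of uncountable regular cardinals are cofinal below $\mu$, the hypothesis $\pp(\mu)=\mu^+$ lets one take the scale with each $\mu_i=\kappa_i^{++}$ for uncountable regular $\kappa_i$ (your step 1, with ``successor of a regular'' replaced by ``double successor of a regular''). Then~\cite{572} gives $\pr_1(\mu_i,\mu_i,\mu_i,\kappa_i)$ outright, and the implication $(1)\Longrightarrow(2)$ of Theorem~\ref{stattheorem} (whose proof is generic and does not use singularity) converts this into the rectangular relation
\begin{equation*}
\mu_i\nrightarrow[(\mu_i:\mu_i)]^2_{\mu_i}
\end{equation*}
needed for Theorem~\ref{lastthm}. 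In short: the decomposition is right, but the negative partition relation you need must be imported from the double-successor case, not asserted at all successors of regulars.
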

\begin{proof}
Since $\pp(\mu)=\mu^+$, there is a scale $(\vec{\mu},\vec{f})$ such that for each $i<\cf(\mu)$,  $\mu_i=\kappa_i^{++}$ for some uncountable regular cardinal $\kappa_i$. By~\cite{572}, it follows that
\begin{equation}
\mu_i\nrightarrow[(\mu_i:\mu_i)]^2_{\mu_i}
\end{equation}
for each $i<\cf(\mu)$, and so we get what we need by way of Theorem~\ref{lastthm}.
\end{proof}

The proof of the preceding is deceptively short, for~\cite{572} is quite a difficult paper.  Shelah shows there that
\begin{equation}
\label{prkapp}
\pr_1(\kappa^{++},\kappa^{++},\kappa^{++},\kappa)
\end{equation}
holds for regular $\kappa$, but the argument for the implication ``$(1)\Longrightarrow(2)$'' in Theorem~\ref{stattheorem}
tells us that
\begin{equation}
\kappa^{++}\nrightarrow[(\kappa^{++}:\kappa^{++})]^2_{\kappa^{++}}
\end{equation}
for every regular $\kappa$ as well.

Corollary~\ref{lastcor} can also be proved in the following manner.  Claim~4.1E on page~70 of~\cite{cardarith} implies, when suitably interpreted and combined with the result quoted in~(\ref{prkapp}), that $\pr_1(\mu^+,\mu^+,\mu,\cf(\mu))$ holds. This fact was noted by Shelah in a personal communication with the author, but Theorem~\ref{upgradetheorem} is still necessary to obtain a coloring with $\mu^+$ colors.  We chose our approach because Theorem~\ref{lastthm} is of independent interest and of more general applicability.

\bibliographystyle{plain}

\end{document}